\documentclass[12pt]{article}

\usepackage[T1]{fontenc}
\usepackage{lmodern}
\usepackage{microtype}
\usepackage{amsmath,amsthm,amsfonts,amssymb}
\usepackage{mathtools}
\usepackage{enumitem}
\usepackage{hyperref}
\usepackage{geometry}
\hypersetup{colorlinks=true,linkcolor=blue,citecolor=blue,urlcolor=blue}

\newtheorem{theorem}{Theorem}[section]
\newtheorem{lemma}[theorem]{Lemma}
\newtheorem{proposition}[theorem]{Proposition}
\newtheorem{corollary}[theorem]{Corollary}
\theoremstyle{definition}
\newtheorem{definition}[theorem]{Definition}
\newtheorem{remark}[theorem]{Remark}
\newtheorem{example}[theorem]{Example}

\newcommand{\id}{\mathrm{id}}

\title{Proximal Relations and Maximal Equicontinuous Factors
for Non-autonomous Dynamical Systems}
\author{%
  Saksham Malik\thanks{Department of Mathematics, University of Delhi, Delhi-110007, India.
    \newline\href{mailto:sakshamalik@gmail.com}{sakshamalik@gmail.com}}%
  \and
  Mohammad Salman\thanks{Department of Mathematics, Shyama Prasad Mukherji College for Women, University of Delhi, Delhi-110026, India.
    \newline\href{mailto:salman25july@gmail.com}{salman25july@gmail.com}}%
  \and
  Ruchi Das\thanks{Department of Mathematics, University of Delhi, Delhi-110007, India.
    \newline\href{mailto:rdasmsu@gmail.com}{rdasmsu@gmail.com} (Corresponding Author)} }

\date{ }
\begin{document}

\maketitle

\begin{abstract}
This paper studies proximal-type relations and maximal equicontinuous factors for non-autonomous dynamical systems on compact metric spaces. Two classes of systems are considered: systems generated by uniformly convergent sequences of maps and periodic systems. For uniformly convergent generators, collective convergence and uniform rigidity imply that the regional proximal relation for the non-autonomous system is equal to the regional proximal relation for the limit autonomous system and if the limit system is equicontinuous, then the proximality relation for the non-autonomous system is equal to the proximal relation for the autonomous limit system. In the same setting, the kernel of the maximal equicontinuous factor is identified as the smallest closed equivalence relation containing the autonomous equicontinuous structure relation of the corresponding autonomous system and preserved by all generators. For periodic systems, regional proximality and Banach proximality are determined by the period map, yielding a criterion in terms of mean equicontinuity and the relation of sensitivity in the mean. Examples show that the hypotheses used in these results are necessary.
\end{abstract}

\noindent\textbf{Mathematics Subject Classification.} Primary 37B55; Secondary 37B05, 37B20.\\
\noindent\textbf{Keywords:} Regional Proximality Relation, Proximality Relation, Uniform Rigidity, Maximal Equicontinuous Factor,Non-autonomous Dynamical System

\section{Introduction}

Maximal equicontinuous factors and proximal-type relations are central objects in topological dynamics. In the autonomous setting, the maximal equicontinuous factor gives the canonical equicontinuous quotient, and its kernel is closely connected with the equicontinuous structure relation and regional proximality. This connection goes back to the work of Veech \cite{Veech1968}, Ellis and Keynes \cite{EllisKeynes1971}, and Auslander, Ellis, and Ellis \cite{AuslanderEllisEllis1995}. Related developments in mean dynamics connect regional proximality, Banach proximality, mean equicontinuity, and the relation $Q_{me}$ \cite{LiTuYe2015,QiuZhao2020}.

For non-autonomous dynamical systems, the corresponding structure is more delicate. The dynamics is generated by a sequence of maps, so orbit relations are not governed by the iterates of a single transformation. Since the work of Kolyada and Snoha \cite{KolyadaSnoha1996}, several aspects of non-autonomous dynamics have been studied, including transitivity, equicontinuity, proximality, almost periodicity, periodicity, and entropy \cite{AcostaSanchis2019,KloedenRasmussen2011,SharmaRaghav2018General,YadavSharma2025, YANG2026130635}. Systems generated by uniformly convergent sequences of maps and periodic systems have also been investigated in \cite{malik2026auslander,MalikSalmanDas2026,RaghavSharma2017}. The present paper studies these systems from the viewpoint of relation theory and maximal equicontinuous factors.

The guiding question is when the proximal-type relations and equicontinuous quotient structure of a non-autonomous system can be described through a naturally associated autonomous system. This question is subtle even when such an autonomous model is available. Uniform convergence of the generators, for example, controls only finite-time behaviour and does not by itself determine regional proximality or factor kernels. These objects depend on orbit relations over unbounded time scales, and the non-autonomous system may retain effects that are invisible in the limiting map.

The paper treats two tractable classes of non-autonomous systems. For systems generated by uniformly convergent sequences of maps, collective convergence and uniform rigidity provide the additional control needed to relate the non-autonomous relations to the relations of the limiting map. Under these hypotheses, the regional proximal relation of the non-autonomous system coincides with that of the limit system:
\[
RP_{f_{1,\infty}}=RP_g.
\]
When the limit system is equicontinuous, the corresponding proximal relations are also recovered. Examples show that the conclusion may fail if either collective convergence or uniform rigidity is omitted.

The same uniformly convergent setting also yields a description of maximal equicontinuous factors. Working with factor maps that intertwine each generator individually, the paper proves existence of the maximal equicontinuous factor for non-autonomous systems on compact metric spaces. Under collective convergence and uniform rigidity, its kernel is identified as the smallest closed equivalence relation containing the autonomous equicontinuous structure relation $S_g$ of the limit map and preserved by all generators. Thus the non-autonomous maximal equicontinuous factor is obtained from the classical autonomous structure relation by imposing the invariance required by the full non-autonomous sequence.

Periodic systems are also explored. In this class, regional proximality and Banach proximality reduce to the corresponding relations of the period map, and the resulting relation theory connects naturally with mean equicontinuity and the condition $Q_{me}(g)=\Delta_X$. These results complement the uniformly convergent case by showing how the structure simplifies when the non-autonomous dynamics has a periodic form.

Examples and counterexamples are included throughout to mark the scope of the hypotheses. They show that non-autonomous proximal relations and equicontinuous factor kernels may behave substantially differently from their autonomous counterparts, even in systems that are close to autonomous ones. The results therefore identify concrete settings in which the classical relation-theoretic and factor-theoretic picture can be recovered.

The paper is organized as follows. Section~\ref{sec:prelim} fixes notation and recalls the definitions used throughout. Section~\ref{sec:relations} studies proximality, regional proximality, Banach proximality, and mean relations in the uniformly convergent and periodic settings. Section~\ref{sec:mef} develops maximal equicontinuous factors for non-autonomous systems and identifies their kernels under collective convergence and uniform rigidity.

\section{Preliminaries}\label{sec:prelim}

In this section we fix notation and record the notions used throughout the paper. Let $\mathbb N=\{1,2,\dots\}$ and $\mathbb N_0=\mathbb N\cup\{0\}$. For a finite set $E$, the notation $|E|$ denotes its cardinality; in particular, if $I\subseteq\mathbb N_0$ is a finite interval, then $|I|$ denotes the number of integer points in $I$. All spaces appearing below are compact metric spaces unless explicitly stated otherwise. Thus $(X,d)$ denotes a compact metric space, $\Delta_X:=\{(x,x):x\in X\}$ is the diagonal of $X\times X$, and for $x\in X$ and $\varepsilon>0$ we write $B(x,\varepsilon):=\{y\in X:d(x,y)<\varepsilon\}$.

A \emph{non-autonomous dynamical system} on $X$ is a pair $(X,f_{1,\infty})$, where $f_{1,\infty}=(f_n)_{n\ge 1}$ is a sequence of continuous self-maps of $X$. For integers $n\ge 1$ and $i,r\ge 1$ we write
\[
 f_1^n:=f_n\circ\cdots\circ f_1,
 \qquad
 f_i^r:=f_{i+r-1}\circ\cdots\circ f_i,
 \qquad 
 f_i^0=\id_X\qquad (i\ge 1).
\]

Thus $f_1^n$ is the initial composition of length $n$, while $f_i^r$ is the $r$-step tail composition beginning at time $i$. When $f_n=f$ for every $n\ge 1$, one recovers the classical autonomous system $(X,f)$. For continuous maps $T,S\colon X\to X$ we use the uniform metric
\[
 \|T-S\|_\infty:=\sup_{x\in X} d\bigl(T(x),S(x)\bigr).
\]
The system $(X,f_{1,\infty})$ is \emph{equicontinuous} if the family $\{f_1^n:n\ge 0\}$ is equicontinuous on $X$. We say that $(X,f_{1,\infty})$ is \emph{periodic of period $p$} if $p\in\mathbb N$ is the least positive integer such that $f_{n+p}=f_n $ for $n \ge 1$. In that case the associated \emph{period map} is $g:=f_1^p=f_p\circ\cdots\circ f_1$.

A relation $R\subseteq X\times X$ is said to be \emph{preserved} by a continuous map $T\colon X\to X$ if $(T\times T)(R)\subseteq R$.

\begin{definition}\label{def:collective}\cite{RaghavSharma2017}
Let $g\colon X\to X$ be continuous with $f_n\to g$ uniformly. We say that $\{f_i^r\}_{i,r\ge 1}$ \emph{converge collectively} to $\{g^r\}_{r\ge 1}$ if for every $\varepsilon>0$ there exists $N\in\mathbb N$ such that
\[
 \|f_i^r-g^r\|_\infty<\varepsilon
 \quad\text{for all } i\ge N \text{ and } r\ge 1.
\]
\end{definition}

\begin{definition}\label{def:eq-ur-periodic}\cite{BenRejeb2021}
The system $(X,f_{1,\infty})$ is \emph{uniformly rigid} if there exists a sequence $n_k\to\infty$ such that
\[
 \|f_1^{n_k}-\id_X\|_\infty \longrightarrow 0.
\]
\end{definition}

\begin{definition}\label{def:enveloping-semigroup}\cite{Ellis1969}
Let \(g\colon X\to X\) be continuous. The \emph{enveloping semigroup} of the autonomous system \((X,g)\) is $E(X,g):=\overline{\{g^m:m\in\mathbb N_0\}}\subseteq X^X$, where \(X^X\) is endowed with the product topology, equivalently the topology of pointwise convergence. Thus \(q\in E(X,g)\) if and only if there exists a net \((g^{m_\alpha})\) such that $g^{m_\alpha}(z)\longrightarrow q(z)$ for every  $z\in X$.
Elements of \(E(X,g)\) need not be continuous in general.
\end{definition}

\begin{definition}\label{def:factor}
A \emph{factor map} $\pi\colon (X,f_{1,\infty})\longrightarrow (Y,h_{1,\infty})$ between non-autonomous systems is a continuous surjection satisfying $\pi\circ f_n=h_n\circ \pi$ for all $n \in \mathbb N$.

If $R\subseteq X\times X$ is a closed equivalence relation, then $\pi_R\colon X\to X/R$ denotes the quotient map.
\end{definition}

\begin{definition}\label{def:symbolic-shift}\cite{LindMarcus1995}
Let $\mathcal A$ be a finite alphabet. The \emph{one-sided full shift} over $\mathcal A$ is the compact space $\mathcal A^{\mathbb N}=\{x=(x_j)_{j\ge1}:x_j\in\mathcal A\}$ with the product topology. The \emph{shift map} $\sigma\colon \mathcal A^{\mathbb N}\to\mathcal A^{\mathbb N}$ is defined by $\sigma((x_j)_{j\ge1})=(x_{j+1})_{j\ge1}$.

A \emph{finite word} over $\mathcal A$ is an element $w=w_1\cdots w_\ell\in\mathcal A^\ell$ for some $\ell\in\mathbb N$. If $u$ and $v$ are finite words, then $uv$ denotes their concatenation. If $u$ is a finite word and $x\in\mathcal A^{\mathbb N}$, then $ux$ denotes the one-sided sequence obtained by placing $u$ before $x$.

For $a\in\mathcal A$ and $n\in\mathbb N$, the notation $a^n$ denotes the finite word consisting of $n$ consecutive copies of $a$, while $a^\infty$ denotes the one-sided constant sequence $aaa\cdots$. Thus, for example, $0^2 1^3$ denotes the word $00111$.

\end{definition}

\begin{definition}\label{def:timesets}
Let $g\colon X\to X$ be continuous. For $\varepsilon>0$ and $x,y\in X$ define
\[
 N_{f_{1,\infty}}(x,y,\varepsilon)
 :=\bigl\{n\in\mathbb N_0: d\bigl(f_1^n(x),f_1^n(y)\bigr)<\varepsilon\bigr\},
\]
\[
 N_g(x,y,\varepsilon)
 :=\bigl\{n\in\mathbb N_0: d\bigl(g^n(x),g^n(y)\bigr)<\varepsilon\bigr\}.
\]
A set $A\subseteq\mathbb N_0$ is \emph{thick} if it contains intervals of arbitrary finite length, and \emph{syndetic} if there exists $L\ge 1$ such that every interval of length $L$ meets $A$. Its upper Banach density is
\[
 BD^*(A):=\limsup_{|I|\to\infty}\frac{|A\cap I|}{|I|},
\]
where $I$ ranges over finite intervals in $\mathbb N_0$. We say that $A$ has \emph{Banach density one} if $BD^*(\mathbb N_0\setminus A)=0$.
\end{definition}

\begin{definition}\label{def:prox-rp}\cite{LuChen2017}
The \emph{proximal relation} of $(X,f_{1,\infty})$ is
\[
 P_{f_{1,\infty}}
 :=\bigl\{(x,y)\in X\times X:\liminf_{n\to\infty} d\bigl(f_1^n(x),f_1^n(y)\bigr)=0\bigr\},
\]
and the proximal relation of $(X,g)$ is
\[
 P_g
 :=\bigl\{(x,y)\in X\times X:\liminf_{n\to\infty} d\bigl(g^n(x),g^n(y)\bigr)=0\bigr\}.
\]
\end{definition}

\begin{definition}\label{def:Qme}\cite{QiuZhao2020}
A pair $(x,y)\in X\times X$ belongs to $Q_{me}(g)$ if either $x=y$, or else for every $\tau>0$ there exists $c(\tau)>0$ such that for every $\varepsilon>0$ there are $u,v\in X$ and $n\in\mathbb N$ satisfying
\[
 d(u,v)<\varepsilon, \quad \text{and } \quad
 \frac1n\left|\Bigl\{0\le i\le n-1:
 d\bigl(g^i(u),x\bigr)<\tau,
 \ d\bigl(g^i(v),y\bigr)<\tau
\Bigr\}\right|>c(\tau).
\]
The relation $Q_{me}(g)$ is called the \emph{relation of sensitivity in the mean}.
\end{definition}

\begin{definition}\label{def:mean-equicontinuity}\cite{QiuZhao2020}
A continuous map $g\colon X\to X$ is \emph{mean equicontinuous} if for every $\varepsilon>0$ there exists $\delta>0$ such that $d(x,y)<\delta$ implies
\[
 \limsup_{n\to\infty}\frac1n\sum_{i=0}^{n-1}d\bigl(g^i(x),g^i(y)\bigr)<\varepsilon.
\]
\end{definition}

The following estimates for uniformly convergent sequences of maps are used in the paper. They are standard consequences of Wu--Zhu's finite-iterate estimates for uniformly convergent non-autonomous systems \cite[Lemma~2.1 and Corollary~2.2]{WuZhu2013}.

\begin{lemma}\label{lem:finite-window-convergence}
Let $(X,f_{1,\infty})$ be a non-autonomous dynamical system, and let $g\colon X\to X$ be a continuous map such that $f_n\to g$ uniformly. Then for every $L\in\mathbb N$ and every $\varepsilon>0$ there exists $N\in\mathbb N$ such that for all $i\ge N$ and all $0\le r\le L$, we have $\|f_i^r-g^r\|_\infty<\varepsilon$.

\end{lemma}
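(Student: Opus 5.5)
The plan is to argue by induction on $r$, using only uniform convergence $f_n\to g$ together with the uniform continuity of the iterates of $g$ on the compact space $X$. Since $L$ is fixed and finite, it suffices to find, for each $r\le L$, an index $N_r$ with $\|f_i^r-g^r\|_\infty<\varepsilon$ for all $i\ge N_r$, and then to take $N:=\max_{0\le r\le L}N_r$. The case $r=0$ is trivial, because $f_i^0=\id_X=g^0$.

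For the inductive step, fix $r\ge 1$ and write $f_i^r=f_{i+r-1}\circ f_i^{r-1}$ and $g^r=g\circ g^{r-1}$. For $x\in X$, insert the intermediate point $g(f_i^{r-1}(x))$ and use the triangle inequality:
\[
 d\bigl(f_i^r(x),g^r(x)\bigr)\le d\bigl(f_{i+r-1}(f_i^{r-1}x),\,g(f_i^{r-1}x)\bigr)+d\bigl(g(f_i^{r-1}x),\,g(g^{r-1}x)\bigr).
\]
The first term is at most $\|f_{i+r-1}-g\|_\infty$, which is below $\varepsilon/2$ once $i+r-1\ge M$, where $M$ is chosen from the uniform convergence $f_n\to g$. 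For the second term, note that $g$ is uniformly continuous on the compact metric space $X$. So there is $\eta>0$ with $d(a,b)<\eta\Rightarrow d(g(a),g(b))<\varepsilon/2$. By the induction hypothesis applied with tolerance $\eta$, there is $N'$ such that $\|f_i^{r-1}-g^{r-1}\|_\infty<\eta$ for $i\ge N'$. Taking $N_r:=\max(M,N')$ then gives $\|f_i^r-g^r\|_\infty<\varepsilon$ for all $i\ge N_r$.

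The one point needing care is that the induction hypothesis must be available for every tolerance, not just the $\varepsilon$ fixed in the statement. The induction should therefore be on the statement ``for every $\varepsilon>0$ there is $N_r(\varepsilon)$ with $\|f_i^{r}-g^{r}\|_\infty<\varepsilon$ for all $i\ge N_r(\varepsilon)$.'' Finiteness of $L$ is essential: the indices $N_r$ may grow with $r$, because the continuity moduli of the iterates compound. For this reason no bound uniform in all $r$ is claimed; uniformity over all $r$ is exactly what collective convergence (Definition~\ref{def:collective}) adds as a hypothesis. Beyond this, the argument is routine, in line with \cite[Lemma~2.1]{WuZhu2013}.
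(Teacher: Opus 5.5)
Your proof is correct and follows essentially the same route as the paper. The paper gives no argument of its own and defers to \cite[Lemma~2.1]{WuZhu2013}, whose standard proof is the same induction: write $f_i^r=f_{i+r-1}\circ f_i^{r-1}$, split with the triangle inequality, and use uniform continuity of $g$ together with the inductive hypothesis at the smaller tolerance $\eta$, which is why, as you note, the induction must quantify over all tolerances.
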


\begin{lemma}\label{lem:finite-window-tail-equicontinuity}
Let $(X,f_{1,\infty})$ be a non-autonomous dynamical system, and let $g\colon X\to X$ be a continuous map such that $f_n\to g$ uniformly. Then for every $L\in\mathbb N$ and every $\varepsilon>0$ there exist $N\in\mathbb N$ and $\delta>0$ such that for all $i\ge N$ and all $u,v\in X$ with $d(u,v)<\delta$,$d\bigl(f_i^r(u),f_i^r(v)\bigr)<\varepsilon
 \quad (0\le r\le L)$.

\end{lemma}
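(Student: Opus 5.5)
The statement to prove is Lemma~\ref{lem:finite-window-tail-equicontinuity}. I would deduce it from Lemma~\ref{lem:finite-window-convergence} together with the uniform continuity of the finitely many iterates $g^0,\dots,g^L$ on the compact space $X$. The idea is that, for a window of bounded length $L$, every tail composition $f_i^r$ with $i$ large is uniformly close to the continuous map $g^r$. A uniformly continuous map plus a uniformly small perturbation behaves like an ``$\varepsilon$-continuous'' map, and that is exactly the estimate required.

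Fix $L\in\mathbb N$ and $\varepsilon>0$. First, apply Lemma~\ref{lem:finite-window-convergence} with tolerance $\varepsilon/3$. This gives $N$ such that $\|f_i^r-g^r\|_\infty<\varepsilon/3$ for all $i\ge N$ and $0\le r\le L$. Second, each $g^r$ is continuous on the compact space $X$, hence uniformly continuous. Taking the minimum over the finitely many $r\in\{0,\dots,L\}$, we obtain $\delta>0$ such that $d(u,v)<\delta$ implies $d(g^r(u),g^r(v))<\varepsilon/3$ for all $0\le r\le L$. Third, for $i\ge N$, $0\le r\le L$ and $d(u,v)<\delta$, the triangle inequality gives
\[
d\bigl(f_i^r(u),f_i^r(v)\bigr)\le d\bigl(f_i^r(u),g^r(u)\bigr)+d\bigl(g^r(u),g^r(v)\bigr)+d\bigl(g^r(v),f_i^r(v)\bigr)<\varepsilon .
\]
The case $r=0$ is trivial, since $f_i^0=\id_X=g^0$, provided $\delta\le\varepsilon$.

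There is no serious obstacle here. The only point needing care is that $\delta$ must be chosen independently of $i$. This is exactly why the uniform (in $i\ge N$) closeness from Lemma~\ref{lem:finite-window-convergence} is used, rather than equicontinuity of the family $\{f_i^r\}$ directly. Finiteness of the window $\{0,\dots,L\}$ is what allows the minimum over the $\delta$'s to be positive.

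If one instead wanted Lemma~\ref{lem:finite-window-convergence} itself, I would prove it by induction on $r$. The inductive step uses
\[
\|f_i^{r+1}-g^{r+1}\|_\infty\le \|f_{i+r}\circ f_i^r-g\circ f_i^r\|_\infty+\|g\circ f_i^r-g\circ g^r\|_\infty .
\]
The first term is at most $\|f_{i+r}-g\|_\infty$. The second is controlled by the uniform continuity of $g$ and the inductive hypothesis.
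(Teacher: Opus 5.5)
Your argument is correct: applying Lemma~\ref{lem:finite-window-convergence} with tolerance $\varepsilon/3$, using uniform continuity of the finitely many maps $g^0,\dots,g^L$, and then a three-term triangle inequality produces the required $N$ and $\delta$, with $\delta$ independent of $i\ge N$. The paper writes out no proof of its own and only cites Wu--Zhu's finite-iterate estimates; your derivation from Lemma~\ref{lem:finite-window-convergence} is the standard argument behind that citation. Your extra caveat $\delta\le\varepsilon$ for $r=0$ is harmless but redundant, since $g^0=\id_X$ is already among the maps in your uniform-continuity step.
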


\section{Proximality, regional proximality, and mean relations}\label{sec:relations}

This section studies proximal-type relations for non-autonomous systems. We first treat uniformly convergent systems and then periodic systems, where the relations are determined by the period map. Examples show that suitable additional hypotheses are needed in the uniformly convergent case.

\begin{definition}\label{def:rp}
We define the \emph{regional proximal relation} of $(X,f_{1,\infty})$ by declaring $(x,y)\in RP_{f_{1,\infty}}$ if for every $\varepsilon>0$ there exist $u,v\in X$ and $n\ge 1$ such that
\[
 d(x,u)<\varepsilon,
 \qquad
 d(y,v)<\varepsilon,
 \qquad
 d\bigl(f_1^n(u),f_1^n(v)\bigr)<\varepsilon.
\]
\end{definition}

\begin{remark}\label{rem:seq-char-rp}
On compact metric spaces we use the usual sequential characterization of regional proximality: $(x,y)\in RP_{f_{1,\infty}}$ if and only if there exist $u_j\to x$, $v_j\to y$, and integers $n_j\ge 1$ such that
\[
 d\bigl(f_1^{n_j}(u_j),f_1^{n_j}(v_j)\bigr)\longrightarrow 0,
\]
and similarly for $RP_g$.
\end{remark}

\begin{definition}\label{def:bp}
Following the autonomous notion of Banach proximality, we define the \emph{Banach proximal relation} of $(X,f_{1,\infty})$ by
\[
 BP_{f_{1,\infty}}
 :=\bigl\{(x,y)\in X\times X: N_{f_{1,\infty}}(x,y,\varepsilon)\text{ has Banach density one for every }\varepsilon>0\bigr\},
\]
and the relation $BP_g$ of $(X,g)$ is defined analogously.
\end{definition}

\subsection{Uniformly convergent systems}

We study \(P_g\) and \(P_{f_{1,\infty}}\) by considering the sets of times at which the corresponding orbits are \(\varepsilon\)-close, in the case where \(f_n\to g\) uniformly. The next theorem combines the two basic thickness results for these close-time sets.

\begin{theorem}\label{thm:thick-close-times}
Let $(X,f_{1,\infty})$ be a non-autonomous dynamical system, and let $g\colon X\to X$ be a continuous map such that $f_n\to g$ uniformly. Then the following statements hold.
\begin{enumerate}
\item If $(X,f_{1,\infty})$ is uniformly rigid and $(x,y)\in P_g$, then for every $\varepsilon>0$ the set $N_{f_{1,\infty}}(x,y,\varepsilon)$ is thick. In particular, $P_g\subseteq P_{f_{1,\infty}}$.

\item If $(x,y)\in P_{f_{1,\infty}}$, then for every $\varepsilon>0$ the set $N_{f_{1,\infty}}(x,y,\varepsilon)$ is thick.
\end{enumerate}
\end{theorem}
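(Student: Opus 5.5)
The plan is to prove part~(2) first, since it only needs Lemma~\ref{lem:finite-window-tail-equicontinuity}. Then part~(1) follows by showing $P_g\subseteq P_{f_{1,\infty}}$ and invoking part~(2). The basic identity throughout is the cocycle relation
\[
 f_1^{n+r}=f_{n+1}^{r}\circ f_1^{n}\qquad(n,r\ge 0).
\]

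\emph{Part (2).} Fix $\varepsilon>0$ and a length $L\in\mathbb N$.
\begin{enumerate}
\item Apply Lemma~\ref{lem:finite-window-tail-equicontinuity} with this $L$ and $\varepsilon$ to obtain $N$ and $\delta>0$. Then for all $i\ge N$, $0\le r\le L$ and $d(u,v)<\delta$ we have $d(f_i^r(u),f_i^r(v))<\varepsilon$.
\item Since $\liminf_n d(f_1^n(x),f_1^n(y))=0$, choose $n\ge N$ with $d(f_1^n(x),f_1^n(y))<\delta$.
\item Take $u=f_1^n(x)$, $v=f_1^n(y)$ and $i=n+1\ge N$. The cocycle relation gives $d(f_1^{n+r}(x),f_1^{n+r}(y))<\varepsilon$ for $0\le r\le L$.
\item Hence the interval $[n,n+L]$ lies in $N_{f_{1,\infty}}(x,y,\varepsilon)$. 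Since $L$ is arbitrary, this set is thick.
\end{enumerate}

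\emph{Part (1).} By part~(2), it suffices to prove $(x,y)\in P_{f_{1,\infty}}$. Thickness of $N_{f_{1,\infty}}(x,y,\varepsilon)$ then follows, and so does $P_g\subseteq P_{f_{1,\infty}}$. Fix $\varepsilon>0$ and $M\in\mathbb N$; we want a time $t\ge M$ with $d(f_1^t(x),f_1^t(y))<\varepsilon$.
\begin{enumerate}
\item Since $(x,y)\in P_g$, pick $m$ with $d(g^m(x),g^m(y))<\varepsilon/5$.
\item Apply Lemma~\ref{lem:finite-window-tail-equicontinuity} with $L=m$ and tolerance $\varepsilon/5$ to get $N_1$ and $\delta$.
\item Apply Lemma~\ref{lem:finite-window-convergence} with $L=m$ and tolerance $\varepsilon/5$ to get $N_2$.
\item By uniform rigidity, choose $k$ with $n_k\ge\max\{N_1,N_2,M\}$ and $\|f_1^{n_k}-\id_X\|_\infty<\delta$.
\item Set $t=n_k+m$ and $i=n_k+1$. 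For $z\in\{x,y\}$,
\[
 d\bigl(f_1^{t}(z),g^m(z)\bigr)\le d\bigl(f_{i}^m(f_1^{n_k}(z)),f_{i}^m(z)\bigr)+d\bigl(f_{i}^m(z),g^m(z)\bigr)<\tfrac{2\varepsilon}{5}.
\]
The first term is bounded by step~2, because $d(f_1^{n_k}(z),z)<\delta$. The second is bounded by step~3.
\item The triangle inequality then gives $d(f_1^t(x),f_1^t(y))<\varepsilon$ with $t\ge M$.
\end{enumerate}
Since $\varepsilon$ and $M$ are arbitrary, $\liminf_n d(f_1^n(x),f_1^n(y))=0$.

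The main obstacle is the order of quantifiers in part~(1). The window length $m$ must be fixed from the autonomous proximality before the non-autonomous lemmas are applied. Only then can the rigidity time $n_k$ be chosen large enough for both finite-window lemmas to apply at the starting index $n_k+1$. This transfer works only for a fixed finite window $m$, which is why no collective convergence hypothesis is needed here.
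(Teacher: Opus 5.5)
Your proof is correct. Part~(2) is the paper's argument verbatim, but for part~(1) you take a genuinely different route.

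The paper proves thickness in (1) directly, without using (2):
\begin{enumerate}
\item It fixes the window length $L$ first and chooses $\eta$ controlling $\{g^t\}_{0\le t\le L}$.
\item It picks $k$ with $d(g^k(x),g^k(y))<\eta$.
\item It applies Lemma~\ref{lem:finite-window-convergence} on the longer window $k+L$.
\item It absorbs the rigidity error through uniform continuity of $\{g^r\}_{0\le r\le k+L}$.
\end{enumerate}
A five-term triangle estimate then places the whole interval $[m+k,m+k+L]$ inside $N_{f_{1,\infty}}(x,y,\varepsilon)$. The inclusion $P_g\subseteq P_{f_{1,\infty}}$ is read off afterwards.

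You reverse the logic. You transfer only a single close time, with the window $m$ fixed by autonomous proximality. You absorb the rigidity error with Lemma~\ref{lem:finite-window-tail-equicontinuity} applied at the starting index $n_k+1$, rather than with continuity of $g^m$. This gives $P_g\subseteq P_{f_{1,\infty}}$, and part~(2) then upgrades proximality to thickness.

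Your decomposition is shorter and more modular. It also makes explicit that uniform rigidity is needed only for the inclusion: thickness holds for every proximal pair as soon as $f_n\to g$ uniformly. The paper's version keeps part~(1) self-contained and uses only the finite-window convergence lemma there, at the cost of a longer window and a more elaborate estimate.

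One trivial point: if your chosen $m$ happens to be $0$, apply the two lemmas with $L=1$. Alternatively, simply choose $m\ge1$, which is possible because $\liminf_{n} d(g^n(x),g^n(y))=0$ provides infinitely many such times.
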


\begin{proof}
(1) Fix $\varepsilon>0$ and $L\in\mathbb N$. By uniform continuity of the finite family $\{g^t:0\le t\le L\}$, there exists $\eta>0$ such that for all $u,v\in X$,
\[
d(u,v)<\eta \quad\text{implies}\quad d\bigl(g^t(u),g^t(v)\bigr)<\frac{\varepsilon}{5}
\qquad (0\le t\le L).
\]
Since $(x,y)\in P_g$, there exists $k\in\mathbb N_0$ such that $d\bigl(g^k(x),g^k(y)\bigr)<\eta$.

By Lemma~\ref{lem:finite-window-convergence}, applied with window length $k+L$, there exists $N\in\mathbb N$ such that for all $i\ge N$ and all $0\le r\le k+L$, we get $\|f_i^r-g^r\|_\infty<\frac{\varepsilon}{5}$.

By uniform continuity of the finite family $\{g^r:0\le r\le k+L\}$, choose $\delta>0$ such that for all $u,v\in X$,
\[
d(u,v)<\delta \quad\text{implies}\quad d\bigl(g^r(u),g^r(v)\bigr)<\frac{\varepsilon}{10}
\qquad (0\le r\le k+L).
\]
Choose a rigidity time $m\ge N$ such that $\|f_1^m-\id_X\|_\infty<\delta.$ Then, for every $0\le t\le L$,
\[
\begin{aligned}
d\bigl(f_1^{m+k+t}(x),f_1^{m+k+t}(y)\bigr)
&=d\bigl(f_{m+1}^{k+t}(f_1^m(x)),f_{m+1}^{k+t}(f_1^m(y))\bigr)\\
&\le d\bigl(f_{m+1}^{k+t}(f_1^m(x)),g^{k+t}(f_1^m(x))\bigr)\\
&\quad +d\bigl(g^{k+t}(f_1^m(x)),g^{k+t}(f_1^m(y))\bigr)\\
&\quad +d\bigl(g^{k+t}(f_1^m(y)),f_{m+1}^{k+t}(f_1^m(y))\bigr).
\end{aligned}
\]
The first and third terms are $<\varepsilon/5$. Moreover,
\[
\begin{aligned}
d\bigl(g^{k+t}(f_1^m(x)),g^{k+t}(f_1^m(y))\bigr)
&\le d\bigl(g^{k+t}(f_1^m(x)),g^{k+t}(x)\bigr)
   +d\bigl(g^{k+t}(x),g^{k+t}(y)\bigr)\\
&\quad +d\bigl(g^{k+t}(y),g^{k+t}(f_1^m(y))\bigr).
\end{aligned}
\]
The outer two terms are $<\varepsilon/10$ by the choice of $\delta$, while the middle term is
\[
d\bigl(g^t(g^k(x)),g^t(g^k(y))\bigr)<\frac{\varepsilon}{5}
\]
by the choice of $\eta$. Hence
\[
d\bigl(f_1^{m+k+t}(x),f_1^{m+k+t}(y)\bigr)<\varepsilon
\text{ for } 0\le t\le L \text{ and therefore }[m+k,m+k+L]\subseteq N_{f_{1,\infty}}(x,y,\varepsilon).
\]
Since $L$ was arbitrary, the set of $\varepsilon$-close times is thick.

\smallskip
\noindent
(2) Fix $\varepsilon>0$ and $L\in\mathbb N$. By Lemma~\ref{lem:finite-window-tail-equicontinuity}, there exist $N\in\mathbb N$ and $\delta>0$ such that whenever $i\ge N$ and $d(u,v)<\delta$, one has
\[
d\bigl(f_i^r(u),f_i^r(v)\bigr)<\varepsilon
\qquad (0\le r\le L).
\]
Since $(x,y)\in P_{f_{1,\infty}}$, there exists $n\ge N$ such that $d\bigl(f_1^n(x),f_1^n(y)\bigr)<\delta$. For $0\le t\le L$ we then have
\[
d\bigl(f_1^{n+t}(x),f_1^{n+t}(y)\bigr)
=d\bigl(f_{n+1}^t(f_1^n(x)),f_{n+1}^t(f_1^n(y))\bigr)<\varepsilon \text{ which implies } [n,n+L]\subseteq N_{f_{1,\infty}}(x,y,\varepsilon)
\]
and the conclusion follows.
\end{proof}

The next example shows that thick close-time sets need not have Banach density one.

\begin{example}\label{ex:prox-not-banach-prox}
Proximality does not imply Banach proximality, even for an autonomous system. Let $X=\{0,1\}^{\mathbb N}$ with the one-sided shift $\sigma$, and use the standard shift metric
\[
d(x,y)=
\begin{cases}
0, & x=y,\\
2^{-\min\{j\ge 1:x_j\ne y_j\}}, & x\ne y.
\end{cases}
\]
Consider the autonomous system $f_n=\sigma$ for every $n\ge 1$. Put
\[
x=000\cdots,
\qquad
 y=0^1 1^1 0^2 1^2 0^3 1^3\cdots.
\]
Then $(x,y)\in P_{f_{1,\infty}}$, because inside each zero block of length $k$ there are times $n$ for which the first $k$ coordinates of $\sigma^n(y)$ are all zero, and hence
\[
d\bigl(\sigma^n(x),\sigma^n(y)\bigr)\le 2^{-k}.
\]
However, $(x,y)\notin BP_{f_{1,\infty}}$. Indeed, for $\varepsilon=1/2$ the set $N_{f_{1,\infty}}(x,y,1/2)$ consists exactly of the times at which the first symbol of $\sigma^n(y)$ is $0$, and its complement contains the blocks of ones. Since those blocks have lengths $1,2,3,\dots$, the complement contains arbitrarily long intervals, so $N_{f_{1,\infty}}(x,y,1/2)$ does not have Banach density one.
\end{example}

The reverse transfer requires collective convergence, since the comparison length is not fixed.

\begin{proposition}\label{prop:Pf-to-RPg}
Let $(X,f_{1,\infty})$ be a non-autonomous dynamical system, and let $g\colon X\to X$ be a continuous map such that $f_n\to g$ uniformly. Suppose that $(X,f_{1,\infty})$ is uniformly rigid and that the family $\{f_i^r\}_{i,r\ge 1}$ converges collectively to $\{g^r\}_{r\ge 1}$. Then $P_{f_{1,\infty}}\subseteq RP_g$.
\end{proposition}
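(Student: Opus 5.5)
Given $(x,y)\in P_{f_{1,\infty}}$ and $\varepsilon>0$, I will produce points $u,v$ with $d(x,u)<\varepsilon$, $d(y,v)<\varepsilon$ and a time $n\ge 1$ with $d(g^n(u),g^n(v))<\varepsilon$. This is exactly the definition of $(x,y)\in RP_g$, which is Definition~\ref{def:rp} with $g$ in place of $f_{1,\infty}$. The plan is to split a long non-autonomous orbit at a rigidity time $m$. The initial block $f_1^m$ moves points only slightly, and the tail block $f_{m+1}^{r}$ is uniformly close to $g^r$ for \emph{all} $r$ by collective convergence. The natural choice is therefore $u=f_1^m(x)$ and $v=f_1^m(y)$.

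First, by Definition~\ref{def:collective}, choose $N$ such that $\|f_i^r-g^r\|_\infty<\varepsilon/3$ for all $i\ge N$ and all $r\ge 1$. Next, by uniform rigidity, choose $m\ge N$ (in fact $m\ge N-1$ suffices, since the tail starts at $m+1$) with $\|f_1^m-\id_X\|_\infty<\varepsilon$. Then set $u:=f_1^m(x)$ and $v:=f_1^m(y)$, which gives $d(x,u)<\varepsilon$ and $d(y,v)<\varepsilon$. Since $(x,y)\in P_{f_{1,\infty}}$, the liminf is zero, so there are arbitrarily large $n$ with $d(f_1^n(x),f_1^n(y))<\varepsilon/3$; pick such an $n$ with $n>m$ and put $r:=n-m\ge 1$. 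Then
\[
f_1^n(x)=f_{m+1}^{r}(u),\qquad f_1^n(y)=f_{m+1}^{r}(v),
\]
and the triangle inequality gives
\[
d\bigl(g^r(u),g^r(v)\bigr)\le d\bigl(g^r(u),f_{m+1}^r(u)\bigr)+d\bigl(f_1^n(x),f_1^n(y)\bigr)+d\bigl(f_{m+1}^r(v),g^r(v)\bigr)<\varepsilon .
\]
Here the outer two terms are controlled because $m+1\ge N$. This yields the required $u,v$ and time $r\ge 1$, so $(x,y)\in RP_g$.

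The only delicate point is that the comparison length $r=n-m$ is unbounded, because the proximal time $n$ can be arbitrarily large. Lemma~\ref{lem:finite-window-convergence} only gives control over windows of a prescribed finite length, so it does not suffice here; the uniform-in-$r$ estimate of collective convergence is exactly what is needed. The order of choices also matters: $N$ must be fixed before $m$, and $m$ before $n$, so that the bound holds for whatever $n$ the proximality of $(x,y)$ supplies. With this ordering I see no further obstacle, and no continuity modulus of $g$ is needed.
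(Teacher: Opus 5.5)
Your proof is correct and follows essentially the same route as the paper: fix $N$ by collective convergence, choose a rigidity time $m\ge N$, take $u=f_1^m(x)$, $v=f_1^m(y)$, pick a proximal time $n>m$, and bound $d\bigl(g^{n-m}(u),g^{n-m}(v)\bigr)$ by the triangle inequality. The only difference is that you use the rigidity bound $<\varepsilon$ rather than the paper's $<\varepsilon/3$, and this works because only $d(x,u)<\varepsilon$ and $d(y,v)<\varepsilon$ are needed.
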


\begin{proof}
Let $(x,y)\in P_{f_{1,\infty}}$, and fix $\varepsilon>0$. By collective convergence, choose $N\in\mathbb N$ such that
\[
\|f_i^r-g^r\|_\infty<\frac{\varepsilon}{3}
\qquad (i\ge N,\ r\ge 1).
\]
Choose a rigidity time $m\ge N$ such that $\|f_1^m-\id_X\|_\infty<\frac{\varepsilon}{3}$.

Since $(x,y)\in P_{f_{1,\infty}}$, there exists $n>m$ such that $d\bigl(f_1^n(x),f_1^n(y)\bigr)<\frac{\varepsilon}{3}$. Put $r=n-m$, $x'=f_1^m(x)$, and $y'=f_1^m(y)$. Then $d(x,x')<\frac{\varepsilon}{3}$, $d(y,y')<\frac{\varepsilon}{3}$, and, since $m+1\ge N$,
\[
\begin{aligned}
d\bigl(g^r(x'),g^r(y')\bigr)
&\le d\bigl(g^r(x'),f_{m+1}^r(x')\bigr)
   +d\bigl(f_{m+1}^r(x'),f_{m+1}^r(y')\bigr)
   +d\bigl(f_{m+1}^r(y'),g^r(y')\bigr)\\
&= d\bigl(g^r(x'),f_{m+1}^r(x')\bigr)
   +d\bigl(f_1^n(x),f_1^n(y)\bigr)
   +d\bigl(f_{m+1}^r(y'),g^r(y')\bigr)\\
&<\varepsilon.
\end{aligned}
\]
Hence $(x,y)\in RP_g$.
\end{proof}

The next corollary gives a setting where the proximal relation is recovered from the limit map.

\begin{corollary}\label{cor:equicontinuous-limit-P-equality}
Let $(X,f_{1,\infty})$ be a non-autonomous dynamical system, and let $g\colon X\to X$ be a continuous map such that $f_n\to g$ uniformly. Suppose that $(X,f_{1,\infty})$ is uniformly rigid, that the family $\{f_i^r\}_{i,r\ge 1}$ converges collectively to $\{g^r\}_{r\ge 1}$, and that $(X,g)$ is equicontinuous. Then $P_{f_{1,\infty}}=P_g$.

\end{corollary}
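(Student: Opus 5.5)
Prove the two inclusions separately. The inclusion $P_g\subseteq P_{f_{1,\infty}}$ is Theorem~\ref{thm:thick-close-times}(1), which only uses uniform rigidity and $f_n\to g$ uniformly. For the reverse inclusion, Proposition~\ref{prop:Pf-to-RPg} already gives $P_{f_{1,\infty}}\subseteq RP_g$ under collective convergence and uniform rigidity. It therefore suffices to show that for an equicontinuous autonomous system $(X,g)$ on a compact metric space, $RP_g\subseteq P_g$.

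For this step, argue directly from equicontinuity rather than quote the classical fact $RP_g=\Delta_X$. That fact is stated for equicontinuous (distal) systems, but here $g$ need not be surjective, so a self-contained argument is safer. Let $(x,y)\in RP_g$ and fix $\varepsilon>0$. Equicontinuity of $\{g^n:n\ge 0\}$ gives $\delta>0$ with $d(a,b)<\delta\Rightarrow d(g^n a,g^n b)<\varepsilon/3$ for all $n$. Using the definition of $RP_g$ with parameter $\min(\delta,\varepsilon/3)$, pick $u,v$ with $d(x,u),d(y,v)<\delta$ and some $n\ge1$ with $d(g^n u,g^n v)<\varepsilon/3$. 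Then
\[
d(g^n x,g^n y)\le d(g^nx,g^nu)+d(g^nu,g^nv)+d(g^nv,g^ny)<\varepsilon .
\]
So for each $\varepsilon$ there is some time $n$ with $d(g^nx,g^ny)<\varepsilon$.

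Proximality, however, requires $\liminf_{n\to\infty}$, i.e.\ small distances at arbitrarily large times. This is the main obstacle, and I handle it with an extra equicontinuity step. Once $d(g^nx,g^ny)<\eta$, equicontinuity (with $\eta$ the $\delta$ chosen for $\varepsilon$) gives $d(g^{n+t}x,g^{n+t}y)<\varepsilon$ for all $t\ge0$. Hence the tail stays $\varepsilon$-close, and $\limsup_n d(g^nx,g^ny)\le\varepsilon$ for every $\varepsilon$. Thus $d(g^nx,g^ny)\to0$ and $(x,y)\in P_g$.

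Combining the pieces,
\[
P_{f_{1,\infty}}\subseteq RP_g\subseteq P_g\subseteq P_{f_{1,\infty}},
\]
which gives equality. The only real care point is the passage from "close at some time" to "close at infinitely many times," and the tail argument above settles it.
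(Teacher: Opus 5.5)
Your proof is correct and follows the paper's route: $P_g\subseteq P_{f_{1,\infty}}$ by Theorem~\ref{thm:thick-close-times}(1), $P_{f_{1,\infty}}\subseteq RP_g$ by Proposition~\ref{prop:Pf-to-RPg}, and $RP_g\subseteq P_g$ by a direct equicontinuity estimate. The only cosmetic difference is in the last step: the paper chooses $d(g^k u,g^k v)<\delta$ and pushes it along the whole tail in a single triangle inequality, obtaining $d(g^{n+k}x,g^{n+k}y)<\varepsilon$ for all $n\ge 0$ at once, whereas you first get closeness of $x$ and $y$ at one time and then propagate it with a second application of equicontinuity.
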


\begin{proof}
By Theorem~\ref{thm:thick-close-times}(1), we have $P_g\subseteq P_{f_{1,\infty}}$. Conversely, Proposition~\ref{prop:Pf-to-RPg} gives $P_{f_{1,\infty}}\subseteq RP_g.$

It therefore suffices to prove that $RP_g\subseteq P_g$ when $g$ is equicontinuous. Let $(x,y)\in RP_g$ and fix $\varepsilon>0$. By equicontinuity, choose $\delta>0$ such that for all $a,b\in X$,
\[
d(a,b)<\delta \quad\text{implies}\quad d\bigl(g^n(a),g^n(b)\bigr)<\frac{\varepsilon}{3}
\qquad\text{for all }n\ge 0.
\]
Since $(x,y)\in RP_g$, there exist $u,v\in X$ and $k\ge 1$ such that
\[
d(x,u)<\delta,
\qquad
 d(y,v)<\delta,
\qquad
 d\bigl(g^k(u),g^k(v)\bigr)<\delta.
\]
Then, for every $n\ge 0$,
\[
\begin{aligned}
d\bigl(g^{n+k}(x),g^{n+k}(y)\bigr)
&\le d\bigl(g^{n+k}(x),g^{n+k}(u)\bigr)
   +d\bigl(g^{n+k}(u),g^{n+k}(v)\bigr)\\
&\quad +d\bigl(g^{n+k}(v),g^{n+k}(y)\bigr)\\
&<\frac{\varepsilon}{3}+\frac{\varepsilon}{3}+\frac{\varepsilon}{3}=\varepsilon.
\end{aligned}
\]
Since $\varepsilon>0$ was arbitrary, $(x,y)\in P_g$.
\end{proof}

The next example shows that even when uniform rigidity and collective convergence hold, one cannot expect $P_{f_{1,\infty}}=P_g$ in general.

\begin{example}\label{ex:nonsurjective-shift-prox}
This example shows that the equicontinuity assumption in Corollary~\ref{cor:equicontinuous-limit-P-equality} cannot simply be omitted. Let $X=\{0,1\}^{\mathbb N}$ with the standard shift metric as defined in example \ref{ex:prox-not-banach-prox} , let $g=\sigma$ be the one-sided shift, and define
\[
 h(x)=0x_1\,00x_1x_2\,000x_1x_2x_3\cdots.
\]
Set $f_1=h$, and $f_n=\sigma \quad (n\ge 2)$. Then $f_n\to g$ uniformly and collective convergence holds from time $2$ onward, since $f_i^r=\sigma^r=g^r$ for all $i\ge2$ and $r\ge1$. The system is uniformly rigid: if $c_k=k^2+1$, then $f_1^{c_k}(x)$ and $x$ agree on their first $k$ coordinates, so $\|f_1^{c_k}-\id_X\|_\infty\le 2^{-k}\to0$.

We claim that $P_{f_{1,\infty}}=X\times X$. Indeed, for each $k\ge 1$, let $b_k=k(k-1)+1$. Then for every $x\in X$ the point $f_1^{b_k}(x)=\sigma^{k(k-1)}(h(x))$ begins with the block $0^k$, because in the word $h(x)$ the block $0^k$ starts at position $k(k-1)+1$. Consequently, for every $x,y\in X$, $d\bigl(f_1^{b_k}(x),f_1^{b_k}(y)\bigr)\le 2^{-k}$, so every pair is proximal. 

On the other hand,$P_g\neq X\times X$; for example, the constant sequences $0^\infty$ and $1^\infty$ are not proximal under the shift. Thus, even in the uniformly rigid and collectively convergent setting, equality of proximal relations can fail without the extra equicontinuity hypothesis on the limit system.
\end{example}

The next theorem gives the main comparison for regional proximality in the uniformly convergent setting.

\begin{theorem}\label{thm:RP-comparison}
Let $(X,f_{1,\infty})$ be a non-autonomous dynamical system, and let $g\colon X\to X$ be a continuous map such that $f_n\to g$ uniformly. Suppose that $(X,f_{1,\infty})$ is uniformly rigid. Then $RP_g\subseteq RP_{f_{1,\infty}}$. If, in addition, the family $\{f_i^r\}_{i,r\ge 1}$ converges collectively to $\{g^r\}_{r\ge 1}$, then $RP_{f_{1,\infty}}=RP_g$.
\end{theorem}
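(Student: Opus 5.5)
The proof splits into the two inclusions. The first, $RP_g\subseteq RP_{f_{1,\infty}}$, uses only uniform rigidity and uniform convergence, following the template of Theorem~\ref{thm:thick-close-times}(1). The reverse inclusion additionally uses collective convergence, following the template of Proposition~\ref{prop:Pf-to-RPg}.

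\emph{Step 1 ($RP_g\subseteq RP_{f_{1,\infty}}$).} Let $(x,y)\in RP_g$ and fix $\varepsilon>0$. By Remark~\ref{rem:seq-char-rp}, or directly from the definition with a smaller tolerance, choose $u,v$ and $k\ge 1$ with
\[
d(x,u)<\varepsilon/2,\qquad d(y,v)<\varepsilon/2,\qquad d\bigl(g^k(u),g^k(v)\bigr)<\varepsilon/5.
\]
Now $k$ is fixed, so Lemma~\ref{lem:finite-window-convergence} applies with window $k$ and gives $N$ such that $\|f_i^k-g^k\|_\infty<\varepsilon/5$ for $i\ge N$. Uniform continuity of $g^k$ gives $\delta>0$, which we take with $\delta<\varepsilon/2$, such that $d(a,b)<\delta$ implies $d(g^k(a),g^k(b))<\varepsilon/5$.

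Take a rigidity time $m\ge N$ with $\|f_1^m-\id_X\|_\infty<\delta$, and set $n=m+k$. Then
\[
d\bigl(f_1^{n}(u),f_1^{n}(v)\bigr)=d\bigl(f_{m+1}^k(f_1^m u),f_{m+1}^k(f_1^m v)\bigr).
\]
Comparing $f_{m+1}^k$ with $g^k$ costs at most $2\varepsilon/5$. The remaining term is $d\bigl(g^k(f_1^m u),g^k(f_1^m v)\bigr)$, and moving $f_1^m u\to u$ and $f_1^m v\to v$ costs at most $2\varepsilon/5$. Together with $d(g^k u,g^k v)<\varepsilon/5$ this gives a total below $\varepsilon$ (adjusting the constants slightly if needed). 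The same witnesses $u,v$, which lie within $\varepsilon$ of $x,y$, and the time $n$ then show $(x,y)\in RP_{f_{1,\infty}}$.

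\emph{Step 2 ($RP_{f_{1,\infty}}\subseteq RP_g$ under collective convergence).} Let $(x,y)\in RP_{f_{1,\infty}}$ and fix $\varepsilon>0$. By collective convergence choose $N$ with $\|f_i^r-g^r\|_\infty<\varepsilon/4$ for all $i\ge N$ and $r\ge1$.

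The main obstacle is that the regional proximality witness times $n$ need not exceed a given rigidity time $m$, so we cannot directly split $f_1^n=f_{m+1}^{n-m}\circ f_1^m$. I would handle this by absorbing an initial rigidity block in the other direction, using rigidity to push the witness time past $N$. Take witnesses $u,v$ and a time $n\ge1$ for tolerance $\eta$, where $\eta$ is chosen below, and consider the points $u'=f_1^n(u)$ and $v'=f_1^n(v)$, which satisfy $d(u',v')<\eta$.

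Alternatively one could pick $m\ge N$ rigid and use the points $f_1^m(u)$ and $f_1^m(v)$ near $x,y$. But then one needs closeness at a time after $m$, and the witness time $n$ may be small. So I would instead use the sequential characterization of Remark~\ref{rem:seq-char-rp}: take $u_j\to x$, $v_j\to y$, and $n_j$ with $d(f_1^{n_j}u_j,f_1^{n_j}v_j)\to0$, and split into two cases.

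\textbf{Case (a): $n_j$ is unbounded.} Pass to a subsequence with $n_j\to\infty$. For each $j$ choose a rigidity time $m$ with $N\le m<n_j$ and $\|f_1^m-\id_X\|_\infty$ small; this requires rigidity times in $[N,n_j)$, which exist for large $j$. Then argue exactly as in Proposition~\ref{prop:Pf-to-RPg} with $x'=f_1^m u_j$, $y'=f_1^m v_j$ and $r=n_j-m$.

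\textbf{Case (b): $n_j$ is bounded.} Pass to a subsequence with $n_j=n$ constant. Continuity of $f_1^n$ then gives $f_1^n(x)=f_1^n(y)$. Now take rigidity times $m_k\to\infty$ with $m_k>n$. Since the orbits of $x$ and $y$ coincide from time $n$ on,
\[
d\bigl(f_1^{m_k}x,f_1^{m_k}y\bigr)=0,
\]
so $(x,y)\in P_{f_{1,\infty}}$. Proposition~\ref{prop:Pf-to-RPg} then yields $(x,y)\in RP_g$.

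Combining Steps 1 and 2 gives the equality $RP_{f_{1,\infty}}=RP_g$.
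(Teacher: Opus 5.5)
Your proposal is correct and follows essentially the same route as the paper: Step 1 is the paper's rigidity-plus-finite-window estimate, and Step 2 uses the same sequential characterization with the same bounded/unbounded split, where your unbounded case is the paper's Case 2. The only deviation is in the bounded case, where you pass through $(x,y)\in P_{f_{1,\infty}}$ and invoke Proposition~\ref{prop:Pf-to-RPg}; the paper instead observes directly that $f_1^{m_j}(x)=f_1^{m_j}(y)$ for rigidity times $m_j>n$, which already witnesses $(x,y)\in RP_g$ (and in fact forces $x=y$) without using collective convergence.
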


\begin{proof}
First let $(x,y)\in RP_g$ and fix $\varepsilon>0$. By definition of $RP_g$, choose $u,v\in X$ and $k\ge 1$ such that
\[
d(x,u)<\frac{\varepsilon}{3},
\qquad
 d(y,v)<\frac{\varepsilon}{3},
\qquad
 d\bigl(g^k(u),g^k(v)\bigr)<\frac{\varepsilon}{5}.
\]
By uniform continuity of $g^k$, choose $\delta>0$ such that for all $a,b\in X$,
\[
d(a,b)<\delta \quad\text{implies}\quad d\bigl(g^k(a),g^k(b)\bigr)<\frac{\varepsilon}{5}.
\]
By Lemma~\ref{lem:finite-window-convergence}, applied with window length $k$, there exists $N\in\mathbb N$ such that for all $i\ge N$ and all $0\le r\le k$, we get $\|f_i^r-g^r\|_\infty<\frac{\varepsilon}{5}$. Choose a rigidity time $m\ge N$ such that $\|f_1^m-\id_X\|_\infty<\delta$. Then
\[
\begin{aligned}
d\bigl(f_1^{m+k}(u),f_1^{m+k}(v)\bigr)
&=d\bigl(f_{m+1}^k(f_1^m(u)),f_{m+1}^k(f_1^m(v))\bigr)\\
&\le \|f_{m+1}^k-g^k\|_\infty
   +d\bigl(g^k(f_1^m(u)),g^k(f_1^m(v))\bigr)
   +\|f_{m+1}^k-g^k\|_\infty.
\end{aligned}
\]
Moreover,
\[
\begin{aligned}
d\bigl(g^k(f_1^m(u)),g^k(f_1^m(v))\bigr)
&\le d\bigl(g^k(f_1^m(u)),g^k(u)\bigr)
   +d\bigl(g^k(u),g^k(v)\bigr)
   +d\bigl(g^k(v),g^k(f_1^m(v))\bigr)\\
&<\frac{\varepsilon}{5}+\frac{\varepsilon}{5}+\frac{\varepsilon}{5}=\frac{3\varepsilon}{5},
\end{aligned}
\]
because $d(f_1^m(u),u)<\delta$ and $d(f_1^m(v),v)<\delta$. Therefore $d\bigl(f_1^{m+k}(u),f_1^{m+k}(v)\bigr)<\varepsilon$.

Since $d(x,u)<\varepsilon$ and $d(y,v)<\varepsilon$, the points $u$ and $v$ together with the time $m+k$ witness that $(x,y)\in RP_{f_{1,\infty}}$.

Now suppose that collective convergence holds, and let $(x,y)\in RP_{f_{1,\infty}}$. By the sequential characterization, there exist $u_j\to x$, $v_j\to y$, and integers $n_j\ge 1$ such that $d\bigl(f_1^{n_j}(u_j),f_1^{n_j}(v_j)\bigr)\longrightarrow 0$.

Passing to a subsequence, we may suppose either that $(n_j)$ is bounded or that $n_j\to\infty$.

\smallskip
\noindent
\emph{Case 1: $(n_j)$ is bounded.}
Passing to a further subsequence, suppose that $n_j\equiv n$ is constant. Then continuity of $f_1^n$ gives $f_1^n(x)=f_1^n(y)$. Choose rigidity times $m_j>n$ such that $\|f_1^{m_j}-\id_X\|_\infty\longrightarrow 0$. Let $x_j=f_1^{m_j}(x)$ and  $y_j=f_1^{m_j}(y)$. Then $x_j\to x$ and $y_j\to y$. Moreover,
\[
x_j=f_{n+1}^{m_j-n}(f_1^n(x))=f_{n+1}^{m_j-n}(f_1^n(y))=y_j,
\]
so $d(g(x_j),g(y_j))=0$ for every $j$. Hence $(x,y)\in RP_g$.

\smallskip
\noindent
\emph{Case 2: $n_j\to\infty$.}
Let $(q_\ell)$ be a rigidity sequence with $q_\ell\to\infty$ and $\|f_1^{q_\ell}-\id_X\|_\infty\to0$. Passing to a subsequence of $(q_\ell)$, we may suppose $\|f_1^{q_\ell}-\id_X\|_\infty<\frac1\ell$
for $\ell\ge1$.

Since $n_j\to\infty$, choose strictly increasing indices $j_\ell$ such that $n_{j_\ell}>q_\ell$ for every $\ell$. Set
\[
m_\ell=q_\ell,
\qquad
 r_\ell=n_{j_\ell}-m_\ell,
\qquad
 x_\ell=f_1^{m_\ell}(u_{j_\ell}),
\qquad
 y_\ell=f_1^{m_\ell}(v_{j_\ell}).
\]
Then $m_\ell<n_{j_\ell}$, $m_\ell\to\infty$, $x_\ell\to x$, and $y_\ell\to y$. Also,
\[
\begin{aligned}
d\bigl(g^{r_\ell}(x_\ell),g^{r_\ell}(y_\ell)\bigr)
&\le d\bigl(g^{r_\ell}(x_\ell),f_{m_\ell+1}^{r_\ell}(x_\ell)\bigr)
  +d\bigl(f_{m_\ell+1}^{r_\ell}(x_\ell),f_{m_\ell+1}^{r_\ell}(y_\ell)\bigr)\\
&\quad +d\bigl(f_{m_\ell+1}^{r_\ell}(y_\ell),g^{r_\ell}(y_\ell)\bigr)\\
&= d\bigl(g^{r_\ell}(x_\ell),f_{m_\ell+1}^{r_\ell}(x_\ell)\bigr)
  +d\bigl(f_1^{n_{j_\ell}}(u_{j_\ell}),f_1^{n_{j_\ell}}(v_{j_\ell})\bigr)\\
&\quad +d\bigl(f_{m_\ell+1}^{r_\ell}(y_\ell),g^{r_\ell}(y_\ell)\bigr).
\end{aligned}
\]
The middle term tends to $0$ by construction, and the first and third terms tend to $0$ by collective convergence, because $m_\ell\to\infty$. Therefore
\[
d\bigl(g^{r_\ell}(x_\ell),g^{r_\ell}(y_\ell)\bigr)\longrightarrow 0,
\]
so $(x,y)\in RP_g$.
\end{proof}

The next example shows that uniform rigidity alone is not sufficient.

\begin{example}\label{ex:UR-no-CC-counterexample}
Uniform convergence and uniform rigidity do not suffice for the reverse transfer to $RP_g$. Let $X=[0,1]$ and define a sequence $(a_n)_{n\ge 0}$ by $a_0=1$ and
\[
N_0=0,
\qquad
N_k=2\sum_{j=1}^k j^2\qquad (k\ge 1),
\]
\[
a_n=
\begin{cases}
\exp\!\bigl((n-N_{k-1})/k\bigr), & N_{k-1}<n\le N_{k-1}+k^2,\\[1mm]
\exp\!\bigl((N_k-n)/k\bigr), & N_{k-1}+k^2<n\le N_k,
\end{cases}
\]
whenever $N_{k-1}<n\le N_k$. Define $f_n(x)=x^{a_n/a_{n-1}}$
for $n\ge 1$.

Then every $f_n$ is continuous and onto. Moreover $f_n\to g=\id_{[0,1]}$ uniformly: on the $k$-th block the exponents $a_n/a_{n-1}$ are $e^{1/k}$ or $e^{-1/k}$, hence tend to $1$, and the maps $x\mapsto x^c$ converge uniformly to $x\mapsto x$ on $[0,1]$ as $c\to1$.

The initial compositions telescope i.e., $f_1^n(x)=x^{a_n}$ for $n\ge 0$. Since $a_{N_k}=1$, we have $f_1^{N_k}=\id_{[0,1]}$ for $k\ge 1$, so the non-autonomous system is uniformly rigid. On the other hand, at the midpoint $M_k=N_{k-1}+k^2$ of the $k$-th block we have $a_{M_k}=e^k$, hence for every $x,y\in [0,1)$,
\[
f_1^{M_k}(x)=x^{e^k}\longrightarrow 0,
\qquad
f_1^{M_k}(y)=y^{e^k}\longrightarrow 0.
\]
Thus $[0,1)\times [0,1)\subseteq P_{f_{1,\infty}}$. But $g=\id_{[0,1]}$, so $RP_g=\Delta_{[0,1]}$. Therefore $P_{f_{1,\infty}}\nsubseteq RP_g$ and  $RP_{f_{1,\infty}}\neq RP_g$.

Finally, collective convergence fails. For $i_k=N_{k-1}+1$ and $j_k=k^2$, we get $f_{i_k}^{j_k}(x)=x^{a_{i_k+j_k-1}/a_{i_k-1}}=x^{e^k}$. Evaluating at \(x=1/2\), we get
\[
\|f_{i_k}^{j_k}-\id_{[0,1]}\|_\infty
\ge \frac12-2^{-e^k}
\ge \frac12-2^{-e}
>\frac14 .
\]
Hence collective convergence does not hold.
\end{example}

The next example shows that collective convergence alone is not sufficient.

\begin{example}\label{ex:CC-no-UR-counterexample}
Collective convergence does not suffice for transfer of proximal-type relations. Let $X=S^1=\mathbb R/\mathbb Z$ be endowed with the usual quotient metric, let $R_\alpha(x)=x+\alpha \pmod 1$ be an irrational rotation, fix $x_0\in S^1$, and define $f_1(x)=x_0$ and $f_n(x)=R_\alpha(x)$ for $n\ge 2$.

Then $f_n\to g=R_\alpha$ uniformly and $\{f_i^r\}_{i,r\ge 1}$ \emph{converge collectively} to $\{g^r\}_{r\ge 1}$, since for every $i\ge 2$ and $r\ge 1$ one has $f_i^r=g^r$. However, $f_1^n(x)=R_\alpha^{\,n-1}(x_0)$ for $n\ge 1$, which is independent of $x$. Hence every pair of points has identical forward orbit from time $1$ onward, and therefore $P_{f_{1,\infty}}=RP_{f_{1,\infty}}=X\times X$.

On the other hand, $g=R_\alpha$ is an isometry of $S^1$. If $(x,y)\in P_g$, then
\[
d(x,y)=\liminf_{n\to\infty}d\bigl(g^n(x),g^n(y)\bigr)=0,
\]
so $x=y$. Thus $P_g=\Delta_X$. The same argument gives $RP_g=\Delta_X$: if $(x,y)\in RP_g$, then for every $\varepsilon>0$ there exist $u,v\in X$ and $n\ge 1$ such that
\[
d(x,u)<\varepsilon,
\qquad d(y,v)<\varepsilon,
\qquad d\bigl(g^n(u),g^n(v)\bigr)<\varepsilon.
\]
Since $g$ is an isometry, $d(u,v)=d(g^n(u),g^n(v))<\varepsilon$, and hence $d(x,y)<3\varepsilon$. Letting $\varepsilon\to0$ gives $x=y$.

The system is not uniformly rigid. Indeed, for every $n\ge 1$ the map $f_1^n$ is constant, and the quotient metric on $S^1$ has diameter $1/2$, so $\|f_1^n-\id_X\|_\infty=\frac12$.

Thus the initial compositions cannot converge uniformly to the identity.
\end{example}

\subsection{Periodic systems and mean relations}

Throughout this subsection, let $(X,f_{1,\infty})$ be a periodic non-autonomous system with period $p$. For every $m\ge0$ and every $0\le r<p$, grouping the composition into $m$ complete period blocks followed by the initial block of length $r$ gives
\begin{equation}\label{eq:periodic-normal-form-rel}
f_1^{mp+r}=f_1^r\circ g^m.
\end{equation}

The next theorem shows that regional proximality in the periodic case is determined by the period map.

\begin{theorem}\label{prop:periodic-RP-equality}
Let $(X,f_{1,\infty})$ be periodic with period $p$, and let $g=f_1^p$. Then $RP_{f_{1,\infty}}=RP_g$.

\end{theorem}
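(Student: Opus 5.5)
The inclusion $RP_g\subseteq RP_{f_{1,\infty}}$ follows directly from the normal form \eqref{eq:periodic-normal-form-rel}; the reverse inclusion follows by pushing the partial period forward to the next full period with a single uniformly continuous map.

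\textbf{Step 1: $RP_g\subseteq RP_{f_{1,\infty}}$.} Taking $r=0$ in \eqref{eq:periodic-normal-form-rel} gives $g^k=f_1^{kp}$ for every $k\ge 0$. So if $(x,y)\in RP_g$ and $\varepsilon>0$, the witnesses $u,v$ and time $k\ge1$ for $RP_g$ serve, with time $n=kp\ge1$, as witnesses for $RP_{f_{1,\infty}}$.

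\textbf{Step 2: $RP_{f_{1,\infty}}\subseteq RP_g$.} I use the sequential characterization of Remark~\ref{rem:seq-char-rp}. Take $u_j\to x$, $v_j\to y$ and $n_j\ge1$ with $d\bigl(f_1^{n_j}(u_j),f_1^{n_j}(v_j)\bigr)\to0$.
\begin{itemize}
\item Write $n_j=m_jp+r_j$ with $0\le r_j<p$. Since $r_j$ takes finitely many values, I pass to a subsequence on which $r_j=r$ is constant.
\item By \eqref{eq:periodic-normal-form-rel}, $f_1^{n_j}=f_1^r\circ g^{m_j}$.
\item Set $h:=f_{r+1}^{\,p-r}$, with $h=\id_X$ when $r=0$. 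By periodicity, $h\circ f_1^r=f_1^p=g$, and therefore
\[
h\bigl(f_1^{n_j}(z)\bigr)=g^{m_j+1}(z)\qquad(z\in X).
\]
\item The map $h$ is a single continuous map on a compact space, hence uniformly continuous. Consequently
\[
d\bigl(g^{m_j+1}(u_j),g^{m_j+1}(v_j)\bigr)\longrightarrow0 .
\]
\item Since $m_j+1\ge1$ and $u_j\to x$, $v_j\to y$, this gives $(x,y)\in RP_g$.
\end{itemize}

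\textbf{Main obstacle.} The only delicate point is the remainder $r$. Closeness at time $n_j$ gives closeness of $f_1^r(g^{m_j}u_j)$ and $f_1^r(g^{m_j}v_j)$, not of $g^{m_j}u_j$ and $g^{m_j}v_j$ themselves. Going backwards is impossible because $f_1^r$ need not be injective. Composing forward with $h$ avoids this: it reaches the next full period, where closeness is preserved by uniform continuity. Fixing $r$ first makes $h$ a single map, so no family-wide equicontinuity is needed.
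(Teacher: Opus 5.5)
Your proof is correct and is essentially the paper's argument: fix the residue $r$ by pigeonhole, then push forward with $f_{r+1}^{p-r}$ to the next full period. The one real difference is that you skip the paper's split into bounded versus unbounded $m_j$, which is legitimate because uniform continuity of the single map $h$ handles both cases and the definition of $RP_g$ only needs some time $\ge 1$.

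There is one small slip. For $r=0$ the formula $f_{r+1}^{p-r}$ already gives $h=f_1^p=g$, and replacing it by $h=\id_X$ breaks the identity $h\circ f_1^r=g$. Nothing fails as a result, since with $h=\id_X$ you get closeness at time $m_j$, and $n_j=m_jp\ge 1$ forces $m_j\ge 1$.
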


\begin{proof}
The inclusion $RP_g\subseteq RP_{f_{1,\infty}}$ follows from \eqref{eq:periodic-normal-form-rel}, because $f_1^{mp}=g^m$ for every $m\ge0$.

For the reverse inclusion, let $(x,y)\in RP_{f_{1,\infty}}$. Choose sequences $u_j\to x$, $v_j\to y$, and integers $n_j\ge 1$ such that
\[
d\bigl(f_1^{n_j}(u_j),f_1^{n_j}(v_j)\bigr)\longrightarrow 0.
\]
There are only $p$ possible residues modulo $p$. Hence, by the pigeonhole principle, one residue occurs along an infinite subsequence. Passing to that subsequence, write $n_j=m_jp+r$, with $0\le r<p$.

If $(m_j)$ is bounded, pass to a further subsequence and suppose $m_j\equiv m$ is constant. By continuity, $f_1^{mp+r}(x)=f_1^{mp+r}(y)$.

If $r=0$, then $g^m(x)=g^m(y)$, so $(x,y)\in RP_g$. If $1\le r<p$, apply the continuous map $f_{r+1}^{p-r}$ to both sides. Since $f_{r+1}^{p-r}\circ f_1^r=g$, we get $g^{m+1}(x)=g^{m+1}(y)$, and again $(x,y)\in RP_g$.

Now suppose that $m_j\to\infty$. If $r=0$, then
\[
d\bigl(g^{m_j}(u_j),g^{m_j}(v_j)\bigr)=d\bigl(f_1^{n_j}(u_j),f_1^{n_j}(v_j)\bigr)\longrightarrow 0,
\]
and $(x,y)\in RP_g$. If $1\le r<p$, apply the continuous map $f_{r+1}^{p-r}$ to the two points $f_1^{n_j}(u_j)$ and $f_1^{n_j}(v_j)$. Since
\[
f_{r+1}^{p-r}\circ f_1^{n_j}
=f_{r+1}^{p-r}\circ f_1^r\circ g^{m_j}
=g^{m_j+1},
\]
we obtain
\[
d\bigl(g^{m_j+1}(u_j),g^{m_j+1}(v_j)\bigr)\longrightarrow 0.
\]
Hence $(x,y)\in RP_g$ in all cases.
\end{proof}

To handle Banach proximality, we first record a combinatorial lemma on Banach density under block expansion and block projection.

\begin{lemma}\label{lem:periodic-density-lemma}
Let $A\subseteq\mathbb N_0$ and let $p\ge 1$.
\begin{enumerate}
\item If $BD^*(A)=0$, then $pA+F=\{pa+r:a\in A,\ r\in F\}$ has zero upper Banach density for every finite set $F\subseteq\{0,1,\dots,p-1\}$.
\item If $A$ has Banach density one, then the set $\Gamma_p(A)=\bigl\{m\ge 1: A\cap[(m-1)p+1,mp]\neq\varnothing\bigr\}$ also has Banach density one.
\end{enumerate}
\end{lemma}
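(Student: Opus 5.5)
Both parts follow by comparing how intervals in the two index scales correspond. For (1), set $B=pA+F$ and fix $\eta>0$. Since $BD^*(A)=0$, there is $\ell_0$ such that every interval $J\subseteq\mathbb N_0$ with $|J|\ge \ell_0$ satisfies $|A\cap J|\le \eta|J|$. Take an interval $I=[s,s+n-1]$ with $n$ large.

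Any element $pa+r\in B\cap I$ has $a\in J:=[\lfloor s/p\rfloor,\lfloor (s+n-1)/p\rfloor]$, and $|J|\le n/p+2$. Each $a$ contributes at most $|F|\le p$ elements. Hence
\[
|B\cap I|\le |F|\,|A\cap J|\le p\,\eta\,(n/p+2)=\eta n+2p\eta
\]
as soon as $n/p\ge\ell_0$. This gives $|B\cap I|/|I|\le \eta+2p\eta/n$, so $BD^*(B)\le\eta$. As $\eta$ is arbitrary, $BD^*(B)=0$.

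For (2), set $C=\mathbb N_0\setminus A$, so $BD^*(C)=0$. Write $\Gamma=\Gamma_p(A)$. Since $0\notin\Gamma$, the complement $\mathbb N_0\setminus\Gamma$ is $\{0\}\cup D$, where
\[
D=\{m\ge1:[(m-1)p+1,mp]\subseteq C\}.
\]
A single point does not affect upper Banach density, so it suffices to show $BD^*(D)=0$.

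Take an interval $J=[s,s+n-1]$ of indices $m$. The blocks $[(m-1)p+1,mp]$ for $m\in J$ are pairwise disjoint. They lie inside the interval
\[
I=[(s-1)p+1,(s+n-1)p],
\]
which has exactly $|I|=np$ points. Each $m\in D\cap J$ contributes $p$ points of $C\cap I$, so $p\,|D\cap J|\le |C\cap I|$. Dividing by $np$ gives
\[
\frac{|D\cap J|}{|J|}\le\frac{|C\cap I|}{|I|}.
\]
If $s=0$, the block for $m=0$ is not counted, since $D\subseteq\mathbb N$; the interval $I$ then starts at a negative number, but intersecting it with $\mathbb N_0$ changes $|I|$ by at most $p$, which is harmless as $n\to\infty$. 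Letting $|J|\to\infty$ forces $|I|\to\infty$, so $BD^*(D)\le BD^*(C)=0$.

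The only delicate point is the boundary bookkeeping: the $+2$ term in (1) and the endpoint and index-$0$ conventions in (2). Both are absorbed by letting interval lengths tend to infinity, so no real obstacle remains.
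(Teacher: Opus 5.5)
Your proposal is correct and follows essentially the same route as the paper. In (1), you bound $|(pA+F)\cap I|$ by $|F|$ times the number of points of $A$ in an index interval of length at most $|I|/p+2$. In (2), you compare each bad block index with $p$ points of $\mathbb N_0\setminus A$ inside an interval $p$ times as long. The only differences are that you argue directly where the paper argues by contradiction, and that you treat the index $0$ explicitly.
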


\begin{proof}
(1) Let $F\subseteq\{0,1,\dots,p-1\}$ be finite, and let $J=[a,b]\cap\mathbb N_0$ be a finite interval. Fix $r\in F$. The condition $pn+r\in J$ is equivalent to
\[
\frac{a-r}{p}\le n\le \frac{b-r}{p}.
\]
Thus the set of $n\in A$ with $pn+r\in J$ is contained in an interval of $\mathbb N_0$ with cardinality at most $|J|/p+2$. Therefore
\[
|(pA+F)\cap J|
\le |F|\,\sup\bigl\{|A\cap I|: I\subseteq\mathbb N_0\text{ is an interval and } |I|\le |J|/p+2\bigr\}.
\]
Dividing by $|J|$ and taking the limsup over intervals $J$ with $|J|\to\infty$ shows that $BD^*(pA+F)=0$ whenever $BD^*(A)=0$.

\smallskip
\noindent
(2) Let $B=\mathbb N\setminus \Gamma_p(A)$ be the set of bad blocks. For every $m\in B$, the whole block $[(m-1)p+1,mp]$ is disjoint from $A$. Let if possible $B$ have positive upper Banach density. Then there are intervals $K_j\subseteq\mathbb N$ with $|K_j|\to\infty$ and a number $\eta>0$ such that
\[
|B\cap K_j|\ge \eta |K_j|
\qquad (j\ge1).
\]
If $K_j=[s_j,t_j]\cap\mathbb N$, let $J_j=[(s_j-1)p+1,t_jp]\cap\mathbb N_0$.

Then $J_j$ is a finite interval with $|J_j|=p|K_j|$. Since every bad block indexed by $B\cap K_j$ is contained in $\mathbb N_0\setminus A$,
\[
|J_j\cap(\mathbb N_0\setminus A)|\ge p|B\cap K_j|\ge p\eta |K_j|=\eta |J_j|.
\]
This gives $BD^*(\mathbb N_0\setminus A)>0$, contradicting the assumption that $A$ has Banach density one. Hence $\Gamma_p(A)$ has Banach density one.
\end{proof}

The next theorem shows that Banach proximality in the periodic setting is likewise determined by the period map.

\begin{theorem}\label{prop:periodic-BP-equality}
Let $(X,f_{1,\infty})$ be periodic with period $p$, and let $g=f_1^p$. Then $BP_{f_{1,\infty}}=BP_g$.

\end{theorem}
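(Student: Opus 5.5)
The plan is to prove the two inclusions separately, using the normal form \eqref{eq:periodic-normal-form-rel} together with Lemma~\ref{lem:periodic-density-lemma}.

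\emph{Inclusion $BP_g\subseteq BP_{f_{1,\infty}}$.} Let $(x,y)\in BP_g$ and fix $\varepsilon>0$. The finite family $\{f_1^r:0\le r<p\}$ is uniformly continuous, so we can choose $\delta>0$ such that $d(a,b)<\delta$ implies $d(f_1^r(a),f_1^r(b))<\varepsilon$ for all $0\le r<p$. Put $A=\mathbb N_0\setminus N_g(x,y,\delta)$; by hypothesis $BD^*(A)=0$. If $m\notin A$, then by \eqref{eq:periodic-normal-form-rel} we get $d(f_1^{mp+r}(x),f_1^{mp+r}(y))<\varepsilon$ for every $0\le r<p$. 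Every $n\in\mathbb N_0$ can be written uniquely as $mp+r$ with $0\le r<p$. Hence
\[
\mathbb N_0\setminus N_{f_{1,\infty}}(x,y,\varepsilon)\subseteq pA+\{0,\dots,p-1\},
\]
and this set has zero upper Banach density by Lemma~\ref{lem:periodic-density-lemma}(1).

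\emph{Inclusion $BP_{f_{1,\infty}}\subseteq BP_g$.} Let $(x,y)\in BP_{f_{1,\infty}}$ and fix $\varepsilon>0$. The naive approach is to use only the times $n\equiv 0 \pmod p$. This fails: the set $A=N_{f_{1,\infty}}(x,y,\eta)$ has Banach density one, but that does not force the subset $A\cap p\mathbb N_0$ to have density one inside $p\mathbb N_0$ unless we argue blockwise. This is exactly what Lemma~\ref{lem:periodic-density-lemma}(2) handles. It gives that $\Gamma_p(A)$ has Banach density one, i.e.\ for density-one many $m\ge1$ there is some $n\in[(m-1)p+1,mp]\cap A$.

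For such an $n$, write $n=(m-1)p+r$ with $1\le r\le p$. Then
\[
g^m=f_{r+1}^{p-r}\circ f_1^{n},
\]
where the case $r=p$ gives the identity prefix. This follows from \eqref{eq:periodic-normal-form-rel} and the identity $f_{r+1}^{p-r}\circ f_1^r=g$, together with periodicity, which gives $f_{(m-1)p+r+1}^{p-r}=f_{r+1}^{p-r}$. So we choose $\eta>0$ by uniform continuity of the finite family $\{f_{r+1}^{p-r}:1\le r\le p\}$, so that $d(a,b)<\eta$ implies $d(f_{r+1}^{p-r}(a),f_{r+1}^{p-r}(b))<\varepsilon$. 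Then every $m\in\Gamma_p(A)$ satisfies $d(g^m(x),g^m(y))<\varepsilon$. Thus $N_g(x,y,\varepsilon)\supseteq\Gamma_p(A)$ has Banach density one; the single time $m=0$ is irrelevant for density.

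\emph{Main obstacle.} The delicate point is the index bookkeeping in the second inclusion. One must check that the tail composition from time $n+1$ to time $mp$ is exactly $f_{r+1}^{p-r}$ via periodicity, and match this to the block convention $[(m-1)p+1,mp]$ of Lemma~\ref{lem:periodic-density-lemma}(2). The rest is routine.
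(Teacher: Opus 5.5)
Your proof is correct and matches the paper's argument essentially step for step. For $BP_g\subseteq BP_{f_{1,\infty}}$ you use the same $\delta$ from uniform continuity of $\{f_1^r:0\le r<p\}$ together with Lemma~\ref{lem:periodic-density-lemma}(1). For the reverse inclusion you use the same blockwise argument through $\Gamma_p(A)$, Lemma~\ref{lem:periodic-density-lemma}(2) and the maps $f_{r+1}^{p-r}$ (the paper's $h_s$).

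One correction to your side remark: the ``naive approach'' does not fail. Since $f_1^{mp}=g^m$, one has $\mathbb N_0\setminus N_g(x,y,\varepsilon)=\{m: mp\in B\}$, where $B=\mathbb N_0\setminus N_{f_{1,\infty}}(x,y,\varepsilon)$. For any interval $K=[s,t]$ we get $|\{m\in K: mp\in B\}|\le |B\cap[ps,pt]|$ with $|[ps,pt]|\le p|K|$. Hence this set has upper Banach density at most $p\,BD^*(B)=0$. This gives $BP_{f_{1,\infty}}\subseteq BP_g$ directly, without Lemma~\ref{lem:periodic-density-lemma}(2) or any uniform continuity.
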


\begin{proof}
We first prove $BP_g\subseteq BP_{f_{1,\infty}}$. Let $(x,y)\in BP_g$ and fix $\varepsilon>0$. By uniform continuity of the finite family $\{f_1^r:0\le r<p\}$, choose $\delta>0$ such that for all $u,v\in X$,
\[
d(u,v)<\delta \quad\text{implies}\quad d\bigl(f_1^r(u),f_1^r(v)\bigr)<\varepsilon
\qquad (0\le r<p).
\]
Hence, if $m\in N_g(x,y,\delta)$, then for every $0\le r<p$,
\[
d\bigl(f_1^{mp+r}(x),f_1^{mp+r}(y)\bigr)
=d\bigl(f_1^r(g^m(x)),f_1^r(g^m(y))\bigr)<\varepsilon.
\]
Thus
\[
p\cdot N_g(x,y,\delta)+\{0,1,\dots,p-1\}\subseteq N_{f_{1,\infty}}(x,y,\varepsilon).
\]
Since $N_g(x,y,\delta)$ has Banach density one, Lemma~\ref{lem:periodic-density-lemma}(1), applied to $\mathbb N_0\setminus N_g(x,y,\delta)$, implies that the complement of the left-hand side has zero upper Banach density. Hence $N_{f_{1,\infty}}(x,y,\varepsilon)$ has Banach density one.

For the reverse inclusion, let $(x,y)\in BP_{f_{1,\infty}}$ and fix $\varepsilon>0$. For each $1\le s<p$, let $h_s=f_{s+1}^{p-s}=f_p\circ\cdots\circ f_{s+1}$, and set $h_p=\id_X$. By uniform continuity of the finite family $\{h_s:1\le s\le p\}$, choose $\delta>0$ with $\delta<\varepsilon$ such that for all $u,v\in X$,
\[
d(u,v)<\delta \quad\text{implies}\quad d\bigl(h_s(u),h_s(v)\bigr)<\varepsilon
\qquad (1\le s\le p).
\]
Put $A=N_{f_{1,\infty}}(x,y,\delta)$. Since $(x,y)\in BP_{f_{1,\infty}}$, the set $A$ has Banach density one. By Lemma~\ref{lem:periodic-density-lemma}(2), the set
\[
\Gamma_p(A)=\bigl\{m\ge 1: A\cap[(m-1)p+1,mp]\neq\varnothing\bigr\}
\]
has Banach density one. We claim that $\Gamma_p(A)\subseteq N_g(x,y,\varepsilon)$. Indeed, let $m\in\Gamma_p(A)$, and choose $n\in A\cap[(m-1)p+1,mp]$. Write $n=(m-1)p+s$ with $1\le s\le p$. If $s=p$, then $n=mp$ and, by \eqref{eq:periodic-normal-form-rel},
\[
d\bigl(g^m(x),g^m(y)\bigr)=d\bigl(f_1^{mp}(x),f_1^{mp}(y)\bigr)<\delta<\varepsilon.
\]
If $1\le s<p$, then by \eqref{eq:periodic-normal-form-rel} we get $f_1^n=f_1^s\circ g^{m-1}$, so
\[
d\bigl(f_1^s(g^{m-1}(x)),f_1^s(g^{m-1}(y))\bigr)<\delta.
\]
Applying $h_s$ and using $h_s\circ f_1^s=g$, we obtain
\[
d\bigl(g^m(x),g^m(y)\bigr)
=d\bigl(h_s(f_1^s(g^{m-1}(x))),h_s(f_1^s(g^{m-1}(y)))\bigr)<\varepsilon.
\]
Therefore $\Gamma_p(A)\subseteq N_g(x,y,\varepsilon)$, and since $\Gamma_p(A)$ has Banach density one, so does $N_g(x,y,\varepsilon)$. Hence $(x,y)\in BP_g$.
\end{proof}

The final corollary links the periodic relation theory with the mean theory of the period map.

\begin{corollary}\label{cor:periodic-mean-eq-characterization}
Let $(X,f_{1,\infty})$ be periodic with period $p$, and let $g=f_1^p$. Then the following statements are equivalent:
\begin{enumerate}
\item $(X,g)$ is mean equicontinuous;
\item $RP_{f_{1,\infty}}=BP_{f_{1,\infty}}$;
\item $Q_{me}(g)=\Delta_X$.
\end{enumerate}
\end{corollary}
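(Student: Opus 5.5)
By Theorem~\ref{prop:periodic-RP-equality} and Theorem~\ref{prop:periodic-BP-equality}, we have $RP_{f_{1,\infty}}=RP_g$ and $BP_{f_{1,\infty}}=BP_g$. Hence statement (2) is equivalent to the purely autonomous identity $RP_g=BP_g$ for the period map. The corollary therefore reduces to the autonomous equivalences
\[
(X,g)\ \text{mean equicontinuous}\iff RP_g=BP_g\iff Q_{me}(g)=\Delta_X .
\]
These are known results in the mean-dynamics literature \cite{LiTuYe2015,QiuZhao2020}. The plan is to cite them after checking that their hypotheses match the present setting: a continuous self-map of a compact metric space, with no minimality or surjectivity assumed.

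For (1)$\Leftrightarrow$(3) I would invoke the characterization of Qiu and Zhao \cite{QiuZhao2020}: $g$ is mean equicontinuous if and only if $Q_{me}(g)=\Delta_X$.

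For (1)$\Rightarrow$(2), I first note that $BP_g\subseteq RP_g$ always holds. Indeed, if $N_g(x,y,\varepsilon)$ has Banach density one, it is nonempty at some $n\ge1$, so $u=x$, $v=y$ witness regional proximality.

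For the converse inclusion $RP_g\subseteq BP_g$ under mean equicontinuity, take $(x,y)\in RP_g$ with witnesses $u,v$ close to $x,y$ and $d(g^k(u),g^k(v))$ small. Mean equicontinuity bounds the averages $\limsup\frac1n\sum d(g^i(x),g^i(u))$ and the corresponding average for $y,v$. Mean equicontinuity applied to $g^k(u),g^k(v)$ bounds the average distance along their orbits. Combining these gives $\limsup\frac1n\sum_i d(g^i x,g^i y)$ arbitrarily small, hence zero.

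The subtlety is that $BP_g$ requires Banach density, that is, averages over arbitrary windows rather than initial segments. Here I would use the known fact that mean equicontinuity is equivalent to Banach mean equicontinuity, with $\limsup$ taken over windows $\frac1{|I|}\sum_{i\in I}$ \cite{LiTuYe2015,QiuZhao2020}. Running the same estimate with window averages gives upper Banach density of $\{i:d(g^ix,g^iy)\ge\varepsilon\}$ at most $\varepsilon^{-1}$ times an arbitrarily small quantity, so $(x,y)\in BP_g$.

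For (2)$\Rightarrow$(3), I argue by contraposition. Suppose $(x,y)\in Q_{me}(g)$ with $x\ne y$. By definition there are pairs $u,v$ at distance $<\varepsilon$ whose orbits spend frequency $>c(\tau)$ near $(x,y)$. I would use these to produce a pair in $RP_g\setminus BP_g$.

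The natural candidate is a limit point $(u_\infty,u_\infty)$ type argument. Better, I would show directly that $(u,v)$-type pairs force the existence of a regionally proximal pair whose close-time set misses a set of positive upper Banach density. The steps are: pass to limits of the pairs $(u_\varepsilon,v_\varepsilon)$, translate along times where the orbits are near $(x,y)$, and use a compactness/ergodic averaging argument (an invariant measure on $X\times X$ giving positive mass near $(x,y)$, supported on points of $\overline{RP_g}=RP_g$).

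This direction is the main obstacle. The cleanest route is to quote the autonomous theorem $RP_g=BP_g\iff Q_{me}(g)=\Delta_X$ from the cited works. If that is unavailable in this generality, I would instead prove (2)$\Rightarrow$(1) directly. The argument would show that failure of mean equicontinuity yields nearby points $u,v$ whose orbits have positive-density separation. Then $(u_\infty,v_\infty)$-limits lie in $RP_g$, being limits of pairs starting $\varepsilon$-close, while not being Banach proximal.
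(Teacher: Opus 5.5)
Your proposal follows the paper's proof: reduce (2) to the autonomous identity $RP_g=BP_g$ via Theorems~\ref{prop:periodic-RP-equality} and \ref{prop:periodic-BP-equality}, then quote the Qiu--Zhao equivalences, and your direct argument for (1)$\Rightarrow$(2) via Banach mean equicontinuity is a sound supplement. Your fallback sketch for (2)$\Rightarrow$(1) would not work as written, because limits of pairs with $d(u_\varepsilon,v_\varepsilon)\to 0$ lie on the diagonal and so are trivially in $BP_g$; that direction genuinely needs the citation, exactly as in the paper.
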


\begin{proof}
By Qiu--Zhao \cite{QiuZhao2020}, a continuous map $g$ is mean equicontinuous if and only if $RP_g=BP_g$, and this is also equivalent to $Q_{me}(g)=\Delta_X$. On the other hand, Theorems~\ref{prop:periodic-RP-equality} and \ref{prop:periodic-BP-equality} give $RP_{f_{1,\infty}}=RP_g$, and $BP_{f_{1,\infty}}=BP_g$ respectively. Therefore $RP_{f_{1,\infty}}=BP_{f_{1,\infty}}$
 if and only if $RP_g=BP_g$ if and only if $(X,g)$ is mean equicontinuous if and only if $Q_{me}(g)=\Delta_X$.

\end{proof}

\begin{remark}\label{rem:periodic-mean-sharpness}
The equivalence in Corollary~\ref{cor:periodic-mean-eq-characterization} is not a formal consequence of periodicity alone. In the autonomous subcase $p=1$, take $g$ to be the one-sided full shift. This system is not mean equicontinuous, and by the Qiu--Zhao characterization one has $RP_g\ne BP_g$ and $Q_{me}(g)\ne\Delta_X$. Thus the corollary gives a genuine criterion rather than an automatic identity. Outside the periodic regime there is no comparable reduction to the limit map: Examples~\ref{ex:UR-no-CC-counterexample}, \ref{ex:CC-no-UR-counterexample}, and \ref{ex:nonsurjective-shift-prox} exhibit uniformly convergent systems whose non-autonomous proximal or regional proximal relations are much larger than the corresponding relations of $g$.
\end{remark}

\section{Maximal equicontinuous factors and their kernels}\label{sec:mef}

We now study maximal equicontinuous factors for non-autonomous systems. In the autonomous theory, maximal equicontinuous factors and their kernel relations are classical objects; see Veech \cite{Veech1968}, Ellis and Keynes \cite{EllisKeynes1971}, and Auslander et al. \cite{AuslanderEllisEllis1995}. For a non-autonomous system, the corresponding factor theory must take into account the entire sequence of generators. Accordingly, this section considers factor maps that intertwine each generator individually and develops the associated maximal equicontinuous factor in that category.

The maximal equicontinuous factor is first constructed by the standard quotient/product argument. Under collective convergence and uniform rigidity, its kernel is then described as the smallest closed equivalence relation containing the autonomous structure relation of the limit map and preserved by all generators.

\subsection{The maximal equicontinuous factor}

We begin by fixing the factor notion that will be used throughout the section.

\begin{definition}\label{def:strict-eq-factor}
Let $(X,f_{1,\infty})$ be a non-autonomous dynamical system. An \emph{equicontinuous factor} of $(X,f_{1,\infty})$ is a factor map $\pi\colon (X,f_{1,\infty})\longrightarrow (Y,h_{1,\infty})$,

where $Y$ is a compact metric space, $h_{1,\infty}=(h_n)_{n\ge 1}$ is a sequence of continuous self-maps of $Y$, and the system $(Y,h_{1,\infty})$ is equicontinuous. Its kernel relation is
\[
 \ker\pi
 :=\bigl\{(x,y)\in X\times X:\pi(x)=\pi(y)\bigr\}.
\]

A \emph{maximal equicontinuous factor} of $(X,f_{1,\infty})$ is an equicontinuous factor
\[
 \pi_{eq}\colon (X,f_{1,\infty})\longrightarrow
 (X_{eq},h_{1,\infty}^{eq}),
\]
where $(X_{eq},h_{1,\infty}^{eq})$ is an equicontinuous non-autonomous system, such that for every equicontinuous factor $\pi\colon (X,f_{1,\infty})\longrightarrow (Y,h_{1,\infty})$, there exists a unique factor map $\rho\colon (X_{eq},h_{1,\infty}^{eq})\longrightarrow (Y,h_{1,\infty})$ satisfying $\pi=\rho\circ \pi_{eq}$.

\end{definition}

The next proposition records the existence of the maximal equicontinuous factor in this setting.

\begin{proposition}\label{prop:strict-mef-existence}
Let $(X,f_{1,\infty})$ be a non-autonomous dynamical system. Then $(X,f_{1,\infty})$ admits a maximal equicontinuous factor.
\end{proposition}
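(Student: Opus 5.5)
The plan is the standard quotient argument. Let $\mathcal R$ be the family of closed equivalence relations $R\subseteq X\times X$ such that $R=\ker\pi$ for some equicontinuous factor $\pi\colon(X,f_{1,\infty})\to(Y,h_{1,\infty})$. The family is nonempty, since the trivial one-point factor gives $X\times X\in\mathcal R$. Put $R_{eq}:=\bigcap_{R\in\mathcal R}R$. This is a closed equivalence relation. It is preserved by every $f_n$, because each kernel is: if $\pi(x)=\pi(y)$ then $\pi(f_n x)=h_n\pi(x)=h_n\pi(y)=\pi(f_ny)$.

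Set $X_{eq}:=X/R_{eq}$. It is compact metrizable, being the quotient of a compact metric space by a closed equivalence relation. Since $R_{eq}$ is $f_n$-invariant, each $f_n$ descends to a continuous $h^{eq}_n$ with $\pi_{R_{eq}}\circ f_n=h_n^{eq}\circ\pi_{R_{eq}}$. Continuity follows from the quotient topology, because $\pi_{R_{eq}}\circ f_n$ is continuous and constant on classes.

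The main step is to show that $(X_{eq},h^{eq}_{1,\infty})$ is equicontinuous. I would realize it as a subsystem of a product.

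1. Since $X$ is compact metric, $X\times X$ is second countable. Hence the open set $X\times X\setminus R_{eq}=\bigcup_{R\in\mathcal R}(X\times X\setminus R)$ is a countable union by Lindelöf. So $R_{eq}=\bigcap_{k}R_k$ for countably many $R_k=\ker\pi_k$, with $\pi_k\colon X\to(Y_k,h^{(k)}_{1,\infty})$ equicontinuous factors and $\operatorname{diam}Y_k\le1$ after rescaling.
2. Define $\Pi=(\pi_k)_k\colon X\to Z:=\prod_kY_k$, with metric $\rho(z,z')=\sum_k2^{-k}d_k(z_k,z'_k)$ and product maps $H_n=\prod_kh^{(k)}_n$.
3. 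The compositions $H_1^n=\prod_k(h^{(k)})_1^n$ form an equicontinuous family on $Z$. Given $\varepsilon$, choose $K$ with $2^{-K}<\varepsilon/2$ and use the equicontinuity of the finitely many families for $k\le K$.
4. $\Pi(X)$ is compact and $H_n$-invariant, since $H_n\Pi=\Pi f_n$. Its kernel is $\ker\Pi=\bigcap_k R_k=R_{eq}$.
5. The map $\Pi$ therefore induces a homeomorphism $X_{eq}\cong\Pi(X)$ (a continuous bijection of compact Hausdorff spaces) that conjugates $h^{eq}_n$ to $H_n|_{\Pi(X)}$. So $\pi_{eq}:=\pi_{R_{eq}}$ is an equicontinuous factor.

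For the universal property, let $\pi\colon X\to(Y,h_{1,\infty})$ be an equicontinuous factor. Then $\ker\pi\in\mathcal R$, so $R_{eq}\subseteq\ker\pi$. Hence $\pi$ factors as $\rho\circ\pi_{eq}$ with $\rho$ well defined, continuous by the quotient property, and surjective. It satisfies $\rho\circ h^{eq}_n=h_n\circ\rho$, which one checks on $\pi_{eq}(x)$ using surjectivity of $\pi_{eq}$. Uniqueness of $\rho$ holds because $\pi_{eq}$ is onto.

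The only point needing care is the equicontinuity of the quotient. Passing through the countable product embedding avoids working directly with a quotient metric, and the Lindelöf reduction to countably many factors is what keeps the product metrizable.
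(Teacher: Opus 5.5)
Your proof is correct and follows essentially the same route as the paper: you intersect the kernels of all equicontinuous factors, realize $X/R_{eq}$ as an invariant subsystem of a product of equicontinuous factors via the diagonal map, and get the universal property from $R_{eq}\subseteq\ker\pi$. The one difference is your Lindel\"of reduction to countably many kernels, which makes the product metrizable with the explicit metric $\rho$; this turns the paper's brief appeal to equicontinuity ``with respect to the product uniformity'' of a possibly uncountable product into a concrete $\varepsilon$--$\delta$ verification.
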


\begin{proof}
Let $\mathcal R_{eq}$ be the set of all closed equivalence relations $R\subseteq X\times X$ for which the quotient map
\[
 \pi_R\colon X\to X/R
\]
realizes $X/R$ as an equicontinuous factor of $(X,f_{1,\infty})$. This collection is nonempty, since $X\times X$ gives the one-point equicontinuous factor. Since $\mathcal R_{eq}\subseteq \mathcal P(X\times X)$, the collection is a set. For each $R\in\mathcal R_{eq}$, choose one corresponding factor system
\[
 \pi_R\colon (X,f_{1,\infty})\to (X/R,h_{1,\infty}^R).
\]
Let $S_{eq}(f_{1,\infty})=\bigcap_{R\in\mathcal R_{eq}} R$. An intersection of closed equivalence relations is again a closed equivalence relation. Hence $S_{eq}(f_{1,\infty})$ is a closed equivalence relation on $X$.

Consider the diagonal map
\[
 \Pi\colon X\to \prod_{R\in\mathcal R_{eq}} X/R,
 \qquad
 \Pi(x)=\bigl(\pi_R(x)\bigr)_{R\in\mathcal R_{eq}},
\]
and let $Y=\Pi(X)$. For each $n\ge 1$, define
\[
 H_n\bigl((y_R)_{R\in\mathcal R_{eq}}\bigr)=\bigl(h_n^R(y_R)\bigr)_{R\in\mathcal R_{eq}}.
\]
Then $H_n(Y)\subseteq Y$ because each coordinate satisfies $\pi_R\circ f_n=h_n^R\circ \pi_R$.

Thus $\Pi\colon (X,f_{1,\infty})\to (Y,H_{1,\infty})$ is a factor map onto $Y$. The product of equicontinuous systems is equicontinuous with respect to the product uniformity, and therefore its invariant subsystem $(Y,H_{1,\infty})$ is equicontinuous.

Moreover,
\[
 \ker \Pi=\bigcap_{R\in\mathcal R_{eq}} \ker\pi_R
 =\bigcap_{R\in\mathcal R_{eq}} R
 =S_{eq}(f_{1,\infty}).
\]
Therefore $Y$ is canonically homeomorphic to the quotient $X/S_{eq}(f_{1,\infty})$. Since $X$ is compact metric and $S_{eq}(f_{1,\infty})$ is closed, the quotient is compact metrizable. Hence the induced quotient system is an equicontinuous factor of the original system.

Finally, let $\rho\colon (X,f_{1,\infty})\to (Z,k_{1,\infty})$ be any equicontinuous factor. Then $\ker \rho\in\mathcal R_{eq}$, so $S_{eq}(f_{1,\infty})\subseteq \ker \rho$. Hence $\rho$ factors uniquely through the quotient by $S_{eq}(f_{1,\infty})$. This proves the universal property.
\end{proof}

\begin{remark}\label{rem:Seq-kernel-definition}
By Proposition~\ref{prop:strict-mef-existence}, the kernel of the maximal equicontinuous factor is precisely the relation
\[
 S_{eq}(f_{1,\infty})
 =\bigcap\bigl\{\ker\pi: \pi \text{ is an equicontinuous factor map of }(X,f_{1,\infty})\bigr\}.
\]
This notation will be used from now on.
\end{remark}

We next show that every factor carries a continuous map induced by the limit map.

\begin{theorem}\label{thm:shadow-under-CC}
Let $(X,f_{1,\infty})$ be a non-autonomous dynamical system, and let $g\colon X\to X$ be a continuous map such that $f_n\to g$ uniformly. Suppose that the family $\{f_i^r\}_{i,r\ge 1}$ converges collectively to $\{g^r\}_{r\ge 1}$. Let $\pi\colon (X,f_{1,\infty})\to (Y,h_{1,\infty})$ be a factor map. Then the following statements hold.
\begin{enumerate}
\item $\ker\pi$ is preserved by $g$;
\item there exists a unique continuous map $\tau\colon Y\to Y$ such that $\pi\circ g=\tau\circ \pi$;

\item $h_n\to \tau$ uniformly;
\item $\{h_i^r\}_{i,r\ge 1}$ converges collectively to $\{\tau^r\}_{r\ge 1}$; that is,
\[
 \|h_i^r-\tau^r\|_\infty\longrightarrow 0
 \quad\text{uniformly in } r\ge 1 \text{ as } i\to\infty.
\]
\end{enumerate}
\end{theorem}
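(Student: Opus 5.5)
The plan is to prove (1) first, then get (2) from the quotient structure, and derive (3) and (4) from uniform continuity of $\pi$ together with collective convergence. Throughout we use that $\ker\pi$ is closed, since $\pi$ is continuous, and that $\pi$ is uniformly continuous, since $X$ is compact.

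\emph{Step (1).} Let $(x,y)\in\ker\pi$. Because $\pi\circ f_n=h_n\circ\pi$ for every $n$, induction gives $\pi\circ f_i^r=h_i^r\circ\pi$ for all $i,r$. Hence $\pi(f_i^r(x))=\pi(f_i^r(y))$, so $(f_i^r(x),f_i^r(y))\in\ker\pi$ for all $i\ge1$ and $r\ge1$. Now take $r=1$ and let $i\to\infty$. Uniform convergence $f_i\to g$ gives $(f_i(x),f_i(y))\to(g(x),g(y))$, and closedness of $\ker\pi$ yields $(g(x),g(y))\in\ker\pi$. This step uses only uniform convergence. The same argument with general $r$ gives $(g^r\times g^r)(\ker\pi)\subseteq\ker\pi$.

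\emph{Step (2).} Define $\tau(\pi(x)):=\pi(g(x))$. This is well defined by (1). Since $\pi$ is a continuous surjection between compact metric spaces, it is a closed map and hence a quotient map. The map $\pi\circ g$ is continuous and constant on fibres of $\pi$, so $\tau$ is continuous. Uniqueness follows from surjectivity of $\pi$. We also get $\pi\circ g^r=\tau^r\circ\pi$ for all $r$.

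\emph{Steps (3)--(4).} For $y=\pi(x)$ we have
\[
d\bigl(h_i^r(y),\tau^r(y)\bigr)=d\bigl(\pi(f_i^r(x)),\pi(g^r(x))\bigr).
\]
Given $\varepsilon>0$, pick $\delta>0$ from the uniform continuity of $\pi$. Collective convergence gives $N$ with $\|f_i^r-g^r\|_\infty<\delta$ for all $i\ge N$ and $r\ge1$. Then $\|h_i^r-\tau^r\|_\infty\le\varepsilon$ for all $i\ge N$ and all $r\ge 1$, because every $y\in Y$ has a preimage. This is (4). Taking $r=1$ and using only $f_n\to g$ uniformly gives (3).

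The only step needing care is the continuity of $\tau$, that is, the quotient-map property of $\pi$. This is standard, since a closed continuous surjection is a quotient map. No step looks like a real obstacle.
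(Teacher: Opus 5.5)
Your proposal is correct and follows essentially the same route as the paper: (1) by passing $f_n\to g$ through the factor relation (you phrase it via closedness of $\ker\pi$, the paper via continuity of $\pi$), (2) via the quotient-map property of $\pi$, and (3)--(4) via uniform continuity of $\pi$ combined with $\pi\circ f_i^r=h_i^r\circ\pi$ and $\pi\circ g^r=\tau^r\circ\pi$. Your justification that a continuous surjection from a compact space onto a Hausdorff space is closed, hence a quotient map, fills in a detail the paper only asserts.
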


\begin{proof}
(1) Let $(x,y)\in\ker\pi$. Then $\pi(x)=\pi(y)$ and for every $n\ge 1$,
\[
 \pi(f_n(x))=h_n(\pi(x))=h_n(\pi(y))=\pi(f_n(y)).
\]
Since $f_n\to g$ uniformly and $\pi$ is continuous,
\[
 \pi(g(x))=\lim_{n\to\infty}\pi(f_n(x))=
 \lim_{n\to\infty}\pi(f_n(y))=\pi(g(y)).
\]
Thus $\ker\pi$ is preserved by $g$.

(2) Define $\tau\colon Y\to Y$ by $\tau(\pi(x)):=\pi(g(x))$  for $x\in X$.The preceding paragraph shows that this is well-defined. The identity $\tau\circ\pi=\pi\circ g$ implies continuity of $\tau$, because $\pi$ is a quotient map. Uniqueness follows because $\pi$ is onto.

(3) To prove uniform convergence, fix $\varepsilon>0$. Let $d_X$ and $d_Y$ denote the metrics on $X$ and $Y$, respectively. By uniform continuity of $\pi$, there exists $\delta>0$ such that for all $u,v\in X$,
\[
  d_X(u,v)<\delta \quad\text{implies}\quad d_Y\bigl(\pi(u),\pi(v)\bigr)<\varepsilon.
\]
Choose $N$ such that $\|f_n-g\|_\infty<\delta$ for all $n\ge N$. For any $y\in Y$, choose $x\in X$ with $\pi(x)=y$. If $n\ge N$, then $d_X(f_n(x),g(x))<\delta$, and hence
\[
 d_Y\bigl(h_n(y),\tau(y)\bigr)
 =d_Y\bigl(\pi(f_n(x)),\pi(g(x))\bigr)<\varepsilon.
\]
Hence $\|h_n-\tau\|_\infty<\varepsilon$ for all $n\ge N$, so $h_n\to \tau$ uniformly.

(4) By induction on $r$ we have $\pi\circ g^r=\tau^r\circ \pi$, for $r\ge 1$. Fix $\varepsilon>0$. By uniform continuity of $\pi$, choose $\delta>0$ such that for all $u,v\in X$,
\[
 d_X(u,v)<\delta \quad\text{implies}\quad d_Y\bigl(\pi(u),\pi(v)\bigr)<\varepsilon.
\]
By collective convergence above, choose $N$ such that for all $i\ge N$ and all $r\ge 1$, $\|f_i^r-g^r\|_\infty<\delta$.

Now let $i\ge N$, $r\ge 1$, and $y\in Y$. Choose $x\in X$ with $\pi(x)=y$. Since $d_X(f_i^r(x),g^r(x))<\delta$, we get
\[
 d_Y\bigl(h_i^r(y),\tau^r(y)\bigr)
 =d_Y\bigl(\pi(f_i^r(x)),\pi(g^r(x))\bigr)<\varepsilon.
\]
Taking the supremum over $y\in Y$ yields $\|h_i^r-\tau^r\|_\infty<\varepsilon$, uniformly in $r\ge 1$.
\end{proof}

The next proposition records a simple transfer principle for equicontinuity.

\begin{proposition}\label{prop:eq-shadow-to-nads}
Let $(Y,h_{1,\infty})$ be a non-autonomous dynamical system, and let $\tau\colon Y\to Y$ be continuous. Suppose that $\{h_i^r\}_{i,r\ge 1}$ converges collectively to $\{\tau^r\}_{r\ge 1}$. If $(Y,\tau)$ is equicontinuous, then $(Y,h_{1,\infty})$ is equicontinuous.
\end{proposition}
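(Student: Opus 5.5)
Fix $\varepsilon>0$. The goal is a single $\delta>0$ such that $d(u,v)<\delta$ implies $d\bigl(h_1^n(u),h_1^n(v)\bigr)<\varepsilon$ for every $n\ge 0$. The plan splits each long composition into a finite initial segment and a tail. The initial segment is controlled by ordinary uniform continuity. The tail is controlled by collective convergence, which makes it uniformly close to a power of $\tau$; equicontinuity of $\tau$ then finishes the estimate.

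First, equicontinuity of $(Y,\tau)$ gives $\eta>0$ such that $d(a,b)<\eta$ implies $d\bigl(\tau^r(a),\tau^r(b)\bigr)<\varepsilon/3$ for all $r\ge 0$. Next, collective convergence gives $N$ with $\|h_i^r-\tau^r\|_\infty<\varepsilon/3$ for all $i\ge N$ and $r\ge 1$. The case $r=0$ is trivial since $h_i^0=\tau^0=\id_Y$. Then, for $n\ge N$, write
\[
h_1^n=h_N^{\,n-N+1}\circ h_1^{N-1}.
\]
For any $a,b\in Y$, the triangle inequality gives
\[
d\bigl(h_N^{r}(a),h_N^{r}(b)\bigr)\le 2\|h_N^r-\tau^r\|_\infty+d\bigl(\tau^r(a),\tau^r(b)\bigr).
\]
This is $<\varepsilon$ whenever $d(a,b)<\eta$, uniformly in $r\ge 0$.

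Now choose $\delta$ using the finitely many continuous maps $h_1^0,\dots,h_1^{N-1}$ on the compact space $Y$. These are uniformly continuous, so some $\delta>0$ guarantees that $d(u,v)<\delta$ implies
\[
d\bigl(h_1^k(u),h_1^k(v)\bigr)<\min\{\eta,\varepsilon\}\qquad (0\le k\le N-1).
\]
For $n\le N-1$ this already gives the required bound. For $n\ge N$, apply the tail estimate with $a=h_1^{N-1}(u)$, $b=h_1^{N-1}(v)$ and $r=n-N+1$. This gives $d\bigl(h_1^n(u),h_1^n(v)\bigr)<\varepsilon$, so the family $\{h_1^n:n\ge 0\}$ is equicontinuous.

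The main point needing care is the index bookkeeping: the tail must start at an index $\ge N$ so that collective convergence applies for every length $r$ simultaneously. Collective convergence, rather than mere uniform convergence as in Lemma~\ref{lem:finite-window-convergence}, is essential here, because $r$ is unbounded. No other obstacle is expected.
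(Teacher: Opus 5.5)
Your proof is correct and follows essentially the same route as the paper: choose $\eta$ from equicontinuity of $\tau$, use collective convergence to make every tail composition $\varepsilon/3$-close to the corresponding power of $\tau$, and control the finite initial block by uniform continuity with threshold $\min\{\eta,\varepsilon\}$. The only difference is where you split the composition. You write $h_1^n=h_N^{\,n-N+1}\circ h_1^{N-1}$, while the paper writes $h_1^{N+r}=h_{N+1}^r\circ h_1^N$; this is purely a matter of indexing.
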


\begin{proof}
Fix $\varepsilon>0$. By equicontinuity of $(Y,\tau)$, choose $\eta>0$ such that for all $u,v\in Y$,
\[
 d_Y(u,v)<\eta \quad\text{implies}\quad d_Y\bigl(\tau^r(u),\tau^r(v)\bigr)<\frac{\varepsilon}{3}
 \qquad \text{for all } r\ge 0.
\]
By collective convergence, choose $N\in\mathbb N$ such that
\[
 \|h_{N+1}^r-\tau^r\|_\infty<\frac{\varepsilon}{3}
 \qquad\text{for all } r\ge 1.
\]
Because the finite family $\{h_1^j:0\le j\le N\}$ is equicontinuous, choose $\delta>0$ such that for all $y_1,y_2\in Y$,
\[
 d_Y(y_1,y_2)<\delta \quad\text{implies}\quad
 d_Y\bigl(h_1^j(y_1),h_1^j(y_2)\bigr)<\min\{\varepsilon,\eta\}
 \qquad (0\le j\le N).
\]
Now let $d_Y(y_1,y_2)<\delta$. For $n\le N$ the desired estimate already holds. If $n=N+r$ with $r\ge 1$, then $h_1^{N+r}=h_{N+1}^r\circ h_1^N$, so
\[
 \begin{aligned}
 d_Y\bigl(h_1^{N+r}(y_1),h_1^{N+r}(y_2)\bigr)
 &\le \|h_{N+1}^r-\tau^r\|_\infty
     +d_Y\bigl(\tau^r(h_1^N(y_1)),\tau^r(h_1^N(y_2))\bigr)\\
 &\quad +\|h_{N+1}^r-\tau^r\|_\infty \\
 &< \frac{\varepsilon}{3}+\frac{\varepsilon}{3}+\frac{\varepsilon}{3}=\varepsilon.
 \end{aligned}
\]
Hence $\{h_1^n:n\ge 0\}$ is equicontinuous.
\end{proof}

\begin{lemma}\label{lem:ur-factors}
Let $\pi\colon (X,f_{1,\infty})\longrightarrow (Y,h_{1,\infty})$ be a factor map between non-autonomous systems. If $(X,f_{1,\infty})$ is uniformly rigid, then $(Y,h_{1,\infty})$ is uniformly rigid.
\end{lemma}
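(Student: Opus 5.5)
The plan is to push the rigidity times of $(X,f_{1,\infty})$ down to $Y$ through the intertwining relation, using uniform continuity of $\pi$ and surjectivity to control the sup-norm on $Y$.

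First, iterating $\pi\circ f_n=h_n\circ\pi$ gives, by induction on $n$,
\[
\pi\circ f_1^n=h_1^n\circ\pi \qquad (n\ge 0).
\]
For the inductive step, $\pi\circ f_1^{n+1}=\pi\circ f_{n+1}\circ f_1^n=h_{n+1}\circ\pi\circ f_1^n=h_{n+1}\circ h_1^n\circ\pi$.

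Next, let $(n_k)$ be a sequence with $n_k\to\infty$ and $\|f_1^{n_k}-\id_X\|_\infty\to0$; we claim the same sequence witnesses uniform rigidity of $(Y,h_{1,\infty})$. Fix $\varepsilon>0$. Since $X$ is compact, $\pi$ is uniformly continuous, so there is $\delta>0$ with $d_X(u,v)<\delta$ implying $d_Y(\pi(u),\pi(v))<\varepsilon$. Choose $K$ such that $\|f_1^{n_k}-\id_X\|_\infty<\delta$ for $k\ge K$.

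Given $y\in Y$, use surjectivity of $\pi$ to pick $x$ with $\pi(x)=y$. Then for $k\ge K$,
\[
d_Y\bigl(h_1^{n_k}(y),y\bigr)=d_Y\bigl(\pi(f_1^{n_k}(x)),\pi(x)\bigr)<\varepsilon.
\]
Taking the supremum over $y$ gives $\|h_1^{n_k}-\id_Y\|_\infty\le\varepsilon$ for all $k\ge K$. Since $\varepsilon$ was arbitrary, $\|h_1^{n_k}-\id_Y\|_\infty\to0$.

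There is no real obstacle here. The only points needing care are that the bound must hold uniformly over $y\in Y$, which surjectivity together with a single uniform-continuity modulus $\delta$ provides, and that the same time sequence $(n_k)$ is reused on $Y$.
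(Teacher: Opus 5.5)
Your proof is correct and follows essentially the same route as the paper: the intertwining identity $h_1^{n}\circ\pi=\pi\circ f_1^{n}$, a single uniform-continuity modulus for $\pi$, and surjectivity to control the supremum over $Y$. Your version is slightly more explicit than the paper's, since you bound $\|h_1^{n_k}-\id_Y\|_\infty$ for all $k\ge K$, which shows that the same sequence $(n_k)$ is a rigidity sequence for $(Y,h_{1,\infty})$.
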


\begin{proof}
Let $(n_k)$ be a rigidity sequence for $(X,f_{1,\infty})$. Since $\pi$ is uniformly continuous, for every $\varepsilon>0$ there is $\delta>0$ such that for all $x_1,x_2\in X$,
\[
 d_X(x_1,x_2)<\delta \quad\text{implies}\quad d_Y(\pi(x_1),\pi(x_2))<\varepsilon.
\]
Choose $k$ with $\|f_1^{n_k}-\id_X\|_\infty<\delta$. Let $y\in Y$. By surjectivity of $\pi$, there exists $x\in X$ such that $\pi(x)=y$. Since $\pi\circ f_n=h_n\circ\pi$ for all $n$, induction gives $h_1^{n_k}\circ\pi=\pi\circ f_1^{n_k}$. Hence
\[
 h_1^{n_k}(y)=h_1^{n_k}(\pi(x))=\pi(f_1^{n_k}(x)),
\]
and therefore
\[
 d_Y(h_1^{n_k}(y),y)=d_Y(\pi(f_1^{n_k}(x)),\pi(x))<\varepsilon.
\]
Taking the supremum over $y\in Y$ gives $\|h_1^{n_k}-\id_Y\|_\infty<\varepsilon$.
\end{proof}

The next theorem relates equicontinuity of a non-autonomous factor to equicontinuity of the factor induced by the limit map.

\begin{theorem}\label{thm:eq-factor-to-eq-shadow}
Let $(X,f_{1,\infty})$ be a non-autonomous dynamical system, and let $g\colon X\to X$ be a continuous map such that $f_n\to g$ uniformly. Suppose that the family $\{f_i^r\}_{i,r\ge 1}$ converges collectively to $\{g^r\}_{r\ge 1}$ and that $(X,f_{1,\infty})$ is uniformly rigid. Let $\pi\colon (X,f_{1,\infty})\to (Y,h_{1,\infty})$ be an equicontinuous factor, and let $\tau\colon Y\to Y$ be the continuous map induced by $g$ as in Theorem~\ref{thm:shadow-under-CC}. Then $(Y,\tau)$ is equicontinuous.
\end{theorem}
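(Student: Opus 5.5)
We must show that $\{\tau^r\}_{r\ge0}$ is equicontinuous on $Y$. Since $(Y,h_{1,\infty})$ is equicontinuous by assumption, the idea is to write $\tau^r$ as a uniform perturbation of a tail composition $h_{m+1}^r$. By Lemma~\ref{lem:ur-factors}, $(Y,h_{1,\infty})$ is uniformly rigid. By Theorem~\ref{thm:shadow-under-CC}(4), $h_i^r\to\tau^r$ uniformly in $r$ as $i\to\infty$. The identity
\[
 h_1^{m+r}=h_{m+1}^r\circ h_1^m
\]
then transfers equicontinuity of the initial compositions to tail compositions starting at a rigidity time $m$, because $h_1^m$ is uniformly close to $\id_Y$.

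Fix $\varepsilon>0$. First, use equicontinuity of $\{h_1^n:n\ge0\}$ to choose $\eta>0$ such that $d_Y(a,b)<\eta$ implies $d_Y(h_1^n(a),h_1^n(b))<\varepsilon/3$ for all $n\ge0$. Second, use Theorem~\ref{thm:shadow-under-CC}(4) to choose $N$ with $\|h_i^r-\tau^r\|_\infty<\varepsilon/3$ for all $i\ge N$ and $r\ge1$. Third, pick a rigidity time $m\ge N$ for $(Y,h_{1,\infty})$ with $\|h_1^m-\id_Y\|_\infty<\eta/3$, and set $\delta=\eta/3$.

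Now take $u,v\in Y$ with $d_Y(u,v)<\delta$, and fix $r\ge1$. We cannot pull $u$ back through $h_1^m$, since $h_1^m$ need not be surjective or invertible. So we compare $h_{m+1}^r(u)$ with $h_{m+1}^r(h_1^m(u))=h_1^{m+r}(u)$ directly. To make this comparison we need equicontinuity of the tail family $\{h_{m+1}^r\}_r$ itself. This is the main obstacle, because the hypothesis only gives equicontinuity of the initial compositions.

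To get around it, rather than estimating $d_Y(h_{m+1}^r(u),h_{m+1}^r(h_1^m(u)))$, we route the estimate through $\tau^r$. Write
\[
 d_Y(\tau^r u,\tau^r v)\le d_Y(\tau^r u,\tau^r h_1^m u)+d_Y(\tau^r h_1^m u,\tau^r h_1^m v)+d_Y(\tau^r h_1^m v,\tau^r v).
\]
For the middle term, replace $\tau^r$ by $h_{m+1}^r$ at a cost of $2\varepsilon/3$. This leaves $d_Y(h_1^{m+r}u,h_1^{m+r}v)<\varepsilon/3$, since $d_Y(u,v)<\eta$. The middle term is therefore below $\varepsilon$.

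The outer terms are the difficulty, since they again need equicontinuity of $\tau^r$ at scale $\eta/3$. Two routes are available. One is to iterate rigidity: use a second rigidity time $m'$ with $m'-m\ge N$, together with $\tau^r\approx h_{m+1}^r$, and compare $h_{m+1}^r(u)$ with $h_{m+1}^r(h_1^m u)$ via $h_1^{m+r}$ evaluated at rigid points. The other, which I would try first, is to apply the argument to $\tau$ directly. Using collective convergence of $\{f_i^r\}$ to $\{g^r\}$ in $X$ and rigidity, the relation $\|h_1^m-\id\|$ small should give $\|\tau^m-\id_Y\|_\infty$ small along the rigidity times. This makes $(Y,\tau)$ uniformly rigid. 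Then $\tau^{r}$ for $r\le m$ is handled by uniform continuity of finitely many maps. For $r>m$, the above decomposition with $\tau^r\approx h_{m+1}^r$ and the factorization $h_1^{m+r}=h_{m+1}^r\circ h_1^m$ closes the estimate, with constants rebalanced to $\varepsilon/5$.
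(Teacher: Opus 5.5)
\textbf{There is a genuine gap, and it is the one you flag yourself.} You fix the rigidity time $m$ before $r$. The outer terms $d_Y(\tau^r u,\tau^r h_1^m u)$ and $d_Y(\tau^r h_1^m v,\tau^r v)$ must then be small uniformly in $r\ge1$, which is exactly the equicontinuity of $\{\tau^r\}$ you are trying to prove. Neither proposed escape closes this.
\begin{itemize}
\item Route 1 is not specified precisely enough to check.
\item Route 2 asserts that $\|\tau^m-\id_Y\|_\infty$ is small along rigidity times, but this is not justified. Writing $h_1^m=h_{N+1}^{m-N}\circ h_1^N$ and using collective convergence only controls $\tau^{m-N}\circ h_1^N$. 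Removing the inner $h_1^N$ again requires equicontinuity of $\{\tau^s\}$.
\item Even if $(Y,\tau)$ were uniformly rigid, that would not by itself give equicontinuity. Your step for $r>m$ reuses the same decomposition with the same uncontrolled outer terms.
\item The worry about equicontinuity of the tail family $\{h_{m+1}^r\}_r$ is a red herring; it is never needed.
\end{itemize}

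\textbf{The missing observation is that nothing in your middle-term estimate ties $m$ to $\delta$.} The bound $d_Y(h_1^{m+r}u,h_1^{m+r}v)<\varepsilon/5$ holds for \emph{every} $m$ once $d_Y(u,v)<\eta$. So set $\delta=\eta$ and reverse the quantifiers: fix $r$ first, then choose the rigidity time $m=m(r)\ge N$. Your decomposition gives
\[
d_Y(\tau^r u,\tau^r v)\le 2\sup_{y\in Y}d_Y\bigl(\tau^r(h_1^m y),\tau^r(y)\bigr)+2\|h_{m+1}^r-\tau^r\|_\infty+d_Y\bigl(h_1^{m+r}u,h_1^{m+r}v\bigr).
\]
Pick $m$ so that $\|h_1^m-\id_Y\|_\infty$ lies below the modulus of uniform continuity of the \emph{single} map $\tau^r$ at scale $\varepsilon/5$. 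Then every term is controlled, and the resulting $\delta$ does not depend on $r$. The only uniformity needed in $r$ comes from the equicontinuity of $\{h_1^n\}$.

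This is the paper's argument in another form. For each fixed $r$, the paper shows $\|h_1^{n_k+r}-\tau^r\|_\infty\to0$ along a rigidity sequence $(n_k)$ of $(Y,h_{1,\infty})$. Hence every $\tau^r$ lies in the uniform closure of the equicontinuous family $\{h_1^n:n\ge0\}$, and the uniform closure of an equicontinuous family is equicontinuous.
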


\begin{proof}
By Lemma~\ref{lem:ur-factors}, the factor system $(Y,h_{1,\infty})$ is uniformly rigid. Let $(n_k)$ be a rigidity sequence for $(Y,h_{1,\infty})$, so that $\|h_1^{n_k}-\id_Y\|_\infty\longrightarrow 0$.Fix $m\ge 0$. If $m=0$, then $\tau^0=\id_Y$ and we get $\|h_1^{n_k}-\tau^0\|_\infty\longrightarrow0.$

Now let $m\ge1$. By Theorem~\ref{thm:shadow-under-CC}, $\|h_{n_k+1}^m-\tau^m\|_\infty\longrightarrow 0$, because $n_k+1\to\infty$. Since $h_1^{n_k+m}=h_{n_k+1}^m\circ h_1^{n_k}$, we claim that $\|h_1^{n_k+m}-\tau^m\|_\infty\longrightarrow 0.$ Indeed, fix $\varepsilon>0$. By uniform continuity of $\tau^m$, choose $\delta>0$ such that for all $a,b\in Y$,
\[
 d_Y(a,b)<\delta \quad\text{implies}\quad d_Y\bigl(\tau^m(a),\tau^m(b)\bigr)<\frac{\varepsilon}{2}.
\]
Choose $k$ so large that
\[
 \|h_1^{n_k}-\id_Y\|_\infty<\delta
 \qquad\text{and}\qquad
 \|h_{n_k+1}^m-\tau^m\|_\infty<\frac{\varepsilon}{2}.
\]
Then for every $y\in Y$,
\[
 \begin{aligned}
 d_Y\bigl(h_1^{n_k+m}(y),\tau^m(y)\bigr)
 &=d_Y\bigl(h_{n_k+1}^m(h_1^{n_k}(y)),\tau^m(y)\bigr)\\
 &\le d_Y\bigl(h_{n_k+1}^m(h_1^{n_k}(y)),\tau^m(h_1^{n_k}(y))\bigr)
    +d_Y\bigl(\tau^m(h_1^{n_k}(y)),\tau^m(y)\bigr)\\
 &<\frac{\varepsilon}{2}+\frac{\varepsilon}{2}=\varepsilon.
 \end{aligned}
\]
This proves the claim. Consequently every iterate $\tau^m$ belongs to the uniform closure of the family of initial compositions $\{h_1^n:n\ge 0\}$.

It remains to show that the uniform closure of an equicontinuous family on a compact metric space is again equicontinuous. Indeed, fix $\varepsilon>0$. Since $(Y,h_{1,\infty})$ is equicontinuous, there exists $\delta>0$ such that for all $y_1,y_2\in Y$,
\[
 d_Y(y_1,y_2)<\delta \quad\text{implies}\quad d_Y\bigl(h_1^n(y_1),h_1^n(y_2)\bigr)<\frac{\varepsilon}{3}
 \qquad (n\ge 0).
\]
If $T$ lies in the uniform closure of $\{h_1^n:n\ge 0\}$, choose $n$ with $\|T-h_1^n\|_\infty<\varepsilon/3$. Then
\[
d_Y\bigl(T(y_1),T(y_2)\bigr)
 \le d_Y\bigl(T(y_1),h_1^n(y_1)\bigr)
    +d_Y\bigl(h_1^n(y_1),h_1^n(y_2)\bigr)
    +d_Y\bigl(h_1^n(y_2),T(y_2)\bigr)
 <\varepsilon, 
\]
whenever $d_Y(y_1,y_2)<\delta$. Thus the closure is equicontinuous. Applying this to the family $\{h_1^n:n\ge 0\}$ shows that $\{\tau^m:m\ge 0\}$ is equicontinuous, i.e. $(Y,\tau)$ is equicontinuous.
\end{proof}

\begin{example}\label{ex:UR-needed-for-shadow}
The uniform rigidity hypothesis in Theorem~\ref{thm:eq-factor-to-eq-shadow} is essential. Let $X=S^1=\mathbb R/\mathbb Z$, define $f_1(x)=0$ and $f_n(x)=2x \pmod 1$ for $n\ge 2$, and let $g(x)=2x\pmod 1$. Since $f_n=g$ for all $n\ge2$, both uniform convergence and collective convergence hold. Moreover, for every $n\ge1$, $f_1^n(x)=0$, so the family of initial compositions consists of $\id_X$ together with constant maps, and is therefore equicontinuous. By contrast, the doubling map is not equicontinuous on $S^1$: for any $\delta>0$, one may choose $x,y\in S^1$ with $0<d(x,y)<\delta$ and then an iterate $m$ such that $d(g^m(x),g^m(y))>1/4$. Thus collective convergence alone does not force the autonomous map induced by the limit on an equicontinuous non-autonomous factor to be equicontinuous.
\end{example}

The next corollary says that, in the uniformly rigid regime, every equicontinuous factor contains the autonomous structure relation of the limit map.

\begin{corollary}\label{cor:strict-factor-kernel-contains-Sg}
Let $(X,f_{1,\infty})$ be a non-autonomous dynamical system, and let $g\colon X\to X$ be a continuous map such that $f_n\to g$ uniformly. Suppose that the family $\{f_i^r\}_{i,r\ge 1}$ converges collectively to $\{g^r\}_{r\ge 1}$ and that $(X,f_{1,\infty})$ is uniformly rigid. Let $\pi_g\colon (X,g)\to (X_g^{eq},T)$ be the autonomous maximal equicontinuous factor of $g$, and let $S_g=\ker\pi_g$ be the autonomous equicontinuous structure relation of $(X,g)$. If $\pi\colon (X,f_{1,\infty})\to (Y,h_{1,\infty})$ is an equicontinuous factor, then $S_g\subseteq \ker\pi$.
\end{corollary}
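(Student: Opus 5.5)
The plan is to turn the non-autonomous equicontinuous factor into an autonomous equicontinuous factor of $(X,g)$ and then use the universal property of the autonomous maximal equicontinuous factor.

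First, apply Theorem~\ref{thm:shadow-under-CC} to $\pi$. By part~(1), $\ker\pi$ is preserved by $g$. By part~(2), there is a unique continuous $\tau\colon Y\to Y$ with $\pi\circ g=\tau\circ\pi$. Since $\pi$ is a continuous surjection of compact metric spaces, this makes
\[
\pi\colon (X,g)\to (Y,\tau)
\]
an autonomous factor map.

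Second, the hypotheses of the present corollary are exactly those of Theorem~\ref{thm:eq-factor-to-eq-shadow}: uniform convergence, collective convergence, uniform rigidity, and equicontinuity of $(Y,h_{1,\infty})$. That theorem therefore gives that $(Y,\tau)$ is equicontinuous. So $\pi$ is an equicontinuous factor of the autonomous system $(X,g)$.

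Third, invoke the classical autonomous theory (Veech, Ellis--Keynes). The maximal equicontinuous factor $\pi_g$ has the property that every equicontinuous factor of $(X,g)$ factors through it. Hence there is a factor map $\rho\colon (X_g^{eq},T)\to (Y,\tau)$ with $\pi=\rho\circ\pi_g$. If $(x,y)\in S_g=\ker\pi_g$, then
\[
\pi(x)=\rho(\pi_g(x))=\rho(\pi_g(y))=\pi(y),
\]
so $(x,y)\in\ker\pi$.

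An equivalent route is to note that $S_g$ is the intersection of the kernels of all equicontinuous factors of $(X,g)$, and $\ker\pi$ is one of them.

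The only potentially delicate step is the second: checking that $\tau$ is equicontinuous. That is precisely where uniform rigidity is needed, as Example~\ref{ex:UR-needed-for-shadow} shows. Since this is already handled by Theorem~\ref{thm:eq-factor-to-eq-shadow}, the corollary follows essentially formally once that theorem is invoked.
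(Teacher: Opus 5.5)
Your proposal is correct and follows the paper's proof. Theorem~\ref{thm:shadow-under-CC} supplies the induced map $\tau$, Theorem~\ref{thm:eq-factor-to-eq-shadow} gives equicontinuity of $(Y,\tau)$, and the universal property of the autonomous maximal equicontinuous factor then yields $S_g\subseteq\ker\pi$.
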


\begin{proof}
By Theorem~\ref{thm:shadow-under-CC}, the factor admits a continuous map $\tau$ induced by $g$, and by Theorem~\ref{thm:eq-factor-to-eq-shadow}, the autonomous system $(Y,\tau)$ is equicontinuous. Thus $\pi\colon (X,g)\to (Y,\tau)$ is an equicontinuous factor of $(X,g)$. By the universal property of the autonomous maximal equicontinuous factor, its kernel contains $S_g$.
\end{proof}

\subsection{Kernel classification and preservation criteria}

We now identify the closed equivalence relations that arise as kernels of equicontinuous factors in the uniformly rigid and collectively convergent setting.

\begin{theorem}\label{thm:kernel-classification}
Let $(X,f_{1,\infty})$ be a non-autonomous dynamical system, and let $g\colon X\to X$ be a continuous map such that $f_n\to g$ uniformly. Suppose that the family $\{f_i^r\}_{i,r\ge 1}$ converges collectively to $\{g^r\}_{r\ge 1}$ and that $(X,f_{1,\infty})$ is uniformly rigid. Let $\pi_g\colon (X,g)\to (X_g^{eq},T)$ be the autonomous maximal equicontinuous factor of $g$, and let $S_g=\ker\pi_g$.
\begin{enumerate}
\item A closed equivalence relation $R\subseteq X\times X$ is the kernel of an equicontinuous factor of $(X,f_{1,\infty})$ if and only if $S_g\subseteq R$ and $(f_n\times f_n)(R)\subseteq R$ for every $n\ge 1$.

\item $S_{eq}(f_{1,\infty})$ is the smallest closed equivalence relation on $X$ that contains $S_g$ and is preserved by every generator $f_n$. In particular, if $S_g$ itself is preserved by every $f_n$, then $S_{eq}(f_{1,\infty})=S_g$. Equivalently, the quotient by the autonomous maximal equicontinuous factor relation of $g$ is already the maximal equicontinuous factor of the non-autonomous system.
\item Let $\mathcal W^*$ be the set of finite words in the alphabet $\mathbb N$, together with the empty word $\varnothing$. For a word $w=(i_1,\dots,i_k)$, let $f_w=f_{i_k}\circ\cdots\circ f_{i_1}$, and let $f_{\varnothing}=\id_X$. Define
\[
 \Sigma_g=\bigcup_{w\in\mathcal W^*}(f_w\times f_w)(S_g).
\]
Then $S_{eq}(f_{1,\infty})$ is the smallest closed equivalence relation on $X$ containing $\Sigma_g$.
\end{enumerate}
\end{theorem}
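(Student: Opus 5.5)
Part (1) carries all the content; parts (2) and (3) follow from it by order-theoretic arguments.

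\emph{Necessity in (1).} Let $R=\ker\pi$ for an equicontinuous factor $\pi\colon(X,f_{1,\infty})\to(Y,h_{1,\infty})$. Then $S_g\subseteq R$ by Corollary~\ref{cor:strict-factor-kernel-contains-Sg}. Preservation by each $f_n$ is immediate: if $\pi(x)=\pi(y)$, then
\[
\pi(f_n x)=h_n\pi(x)=h_n\pi(y)=\pi(f_n y).
\]

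\emph{Sufficiency in (1).} Let $R$ be a closed equivalence relation with $S_g\subseteq R$ and $(f_n\times f_n)(R)\subseteq R$ for all $n$. Set $Y=X/R$ and $\pi=\pi_R$. Since $X$ is compact metric and $R$ is closed, $Y$ is compact metrizable.

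First, each $f_n$ descends to a continuous map $h_n$ on $Y$, because $R$ is $f_n$-invariant and $\pi$ is a quotient map. So $\pi$ is a factor map. By Theorem~\ref{thm:shadow-under-CC}, $g$ induces $\tau\colon Y\to Y$ with $\pi\circ g=\tau\circ\pi$, and $\{h_i^r\}$ converges collectively to $\{\tau^r\}$.

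By Proposition~\ref{prop:eq-shadow-to-nads}, it then suffices to show that $(Y,\tau)$ is equicontinuous. This is the main step. Since $S_g\subseteq R$, the map $\pi$ factors as $\pi=\rho\circ\pi_g$, where $\rho\colon X_g^{eq}\to Y$ is a continuous surjection. We have $\rho\circ T=\tau\circ\rho$, checked on the dense-free level via surjectivity of $\pi_g$. So $(Y,\tau)$ is a factor of the equicontinuous system $(X_g^{eq},T)$.

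A factor of an equicontinuous autonomous system on a compact metric space is equicontinuous. The argument runs as follows. Uniform continuity of $\rho$ handles one direction. For the other, use compactness: if $d_Y(\rho a,\rho b)$ is small, one wants nearby preimages $a',b'$ with $\rho a'=\rho a$, $\rho b'=\rho b$ and $d(a',b')$ small. This holds because $\rho$ is a closed map, so the fiber map is upper semicontinuous. Equicontinuity of $T$ then gives that $\tau^m(\rho a)=\rho T^m a'$ and $\tau^m(\rho b)=\rho T^m b'$ stay close uniformly in $m$.

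An alternative I would try if this is awkward: equicontinuity of $T$ on a compact space is equivalent to the pseudometric $\sup_m d(T^m\cdot,T^m\cdot)$ inducing the topology. Apply the same sup construction to $\tau$ on $Y$ via the Hausdorff distance of fibers. I would do this step with a contradiction-by-sequences argument: suppose $y_k,y_k'\to y$ but $d(\tau^{m_k}y_k,\tau^{m_k}y'_k)\ge\varepsilon$. Lift to $a_k\to a$ and $b_k\to b$ with $\rho a=\rho b=y$. Then use equicontinuity of $T$ and the fact that the fiber $\rho^{-1}(y)$ is mapped by $T^{m}$ into a single fiber. This last part is the delicate point: $T^{m_k}a$ and $T^{m_k}b$ lie in the same $\rho$-fiber, while $T^{m_k}a_k\approx T^{m_k}a$ and $T^{m_k}b_k\approx T^{m_k}b$. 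Pushing through $\rho$ with uniform continuity gives the contradiction. This works cleanly, so the obstacle reduces to careful bookkeeping.

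\emph{Part (2).} By Proposition~\ref{prop:strict-mef-existence} and Remark~\ref{rem:Seq-kernel-definition}, $S_{eq}(f_{1,\infty})$ is the intersection of all kernels of equicontinuous factors. By (1), this is the intersection of all closed equivalence relations containing $S_g$ and preserved by every $f_n$. That family is closed under intersections and contains $X\times X$, so the intersection is its least element and itself belongs to the family. If $S_g$ is already preserved by every $f_n$, it is that least element.

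\emph{Part (3).} Let $\overline{E}$ be the smallest closed equivalence relation containing $\Sigma_g$. Since $\varnothing\in\mathcal W^*$, we have $S_g\subseteq\Sigma_g\subseteq\overline{E}$. Conversely, any $f_n$-invariant relation containing $S_g$ contains $\Sigma_g$, hence contains $\overline{E}$ when it is also a closed equivalence relation. So $\overline{E}\subseteq S_{eq}$.

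It remains to check that $\overline{E}$ is preserved by each $f_n$. Consider
\[
\mathcal C=\{R\ \text{closed equivalence relation}:\ R\supseteq\Sigma_g,\ (f_n\times f_n)(R)\subseteq R\}.
\]
It suffices to show that $\overline{E}$ contains a member of $\mathcal C$, equivalently that $\overline{E}$ itself is invariant. Set
\[
R'=\{(x,y)\in\overline{E}:(f_n x,f_n y)\in\overline{E}\}=\overline{E}\cap(f_n\times f_n)^{-1}(\overline{E}).
\]
This is a closed equivalence relation. It contains $\Sigma_g$, since $(f_n\times f_n)\Sigma_g\subseteq\Sigma_g$. By minimality, $R'=\overline{E}$, so $\overline{E}$ is $f_n$-invariant. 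Hence $\overline{E}=S_{eq}(f_{1,\infty})$.
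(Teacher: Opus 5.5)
Your proposal is correct. Parts (1) and (2) follow the paper's proof. Necessity comes from Corollary~\ref{cor:strict-factor-kernel-contains-Sg} plus invariance of kernels. Sufficiency goes by passing to $X/R$, factoring $\pi_R=\rho\circ\pi_g$, showing that $(X/R,\tau)$ is a factor of $(X_g^{eq},T)$, and then invoking Theorem~\ref{thm:shadow-under-CC} and Proposition~\ref{prop:eq-shadow-to-nads}. Part (2) is the same intersection argument.

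The one place you go beyond the paper is the claim that a factor of an equicontinuous system is equicontinuous. The paper only asserts this; your sequential contradiction argument proves it correctly. Your first sketch of that step should be dropped or restated. Read uniformly in both points, the claim that nearby points of $Y$ have nearby preimages is false, because fibers over $y_k\to y$ and $y_k'\to y$ can approach different parts of $\rho^{-1}(y)$. Upper semicontinuity of fibers gives it only with one point held fixed. That does suffice, since pointwise equicontinuity on a compact space is uniform, but the sequential argument needs none of this.

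Part (3) is where your route genuinely differs. The paper shows that $f_n\times f_n$ maps chains with links in $\Sigma_g\cup\Sigma_g^{-1}$ to such chains, and then says that, by continuity, it preserves the closure. However, the closure of an equivalence relation need not be transitive. So that closure need not be the smallest closed equivalence relation containing $\Sigma_g$. To make the paper's argument airtight, one has to alternate closures and transitive hulls (transfinitely) and check invariance at each stage.

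Your argument avoids this entirely: $\overline{E}\cap(f_n\times f_n)^{-1}(\overline{E})$ is itself a closed equivalence relation containing $\Sigma_g$, because $(f_n\times f_n)(\Sigma_g)\subseteq\Sigma_g$. By minimality it therefore equals $\overline{E}$. This is the cleaner proof of that step.
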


\begin{proof}
(1) First suppose that $R=\ker\pi$ for some equicontinuous factor
\[
 \pi\colon (X,f_{1,\infty})\to (Y,h_{1,\infty}).
\]
Since $\pi$ is a factor, $R$ is preserved by every generator, i.e., $(f_n\times f_n)(R)\subseteq R$ for $n\ge 1$. By Corollary~\ref{cor:strict-factor-kernel-contains-Sg}, we also have $S_g\subseteq R$.

Conversely, suppose that $R$ is a closed equivalence relation containing $S_g$ and preserved by every $f_n$. Let $\pi_R\colon X\to X/R$ be the quotient map. For each $n\ge1$, define $\bar f_n(\pi_R(x))=\pi_R(f_n(x))$. This is well-defined because $R$ is preserved by $f_n$. Moreover, $\bar f_n\circ \pi_R=\pi_R\circ f_n$, so $\bar f_n$ is continuous by the quotient property of $\pi_R$. Hence $\pi_R\colon (X,f_{1,\infty})\to (X/R,\bar f_{1,\infty})$ is a factor map.

Since $S_g\subseteq R$, the map $\rho(\pi_g(x)):=\pi_R(x)$ is well-defined. It is a continuous surjection $\rho\colon X_g^{eq}\to X/R$ satisfying $\pi_R=\rho\circ \pi_g$. Thus $\pi_R$ factors through the autonomous maximal equicontinuous factor map $\pi_g$.

Let $\tau_R$ be the map induced by $g$ on $X/R$ as in Theorem~\ref{thm:shadow-under-CC}. For every $x\in X$,
\[
 \tau_R\bigl(\rho(\pi_g(x))\bigr)
 =\tau_R(\pi_R(x))
 =\pi_R(g(x))
 =\rho(\pi_g(g(x)))
 =\rho\bigl(T(\pi_g(x))\bigr).
\]
Thus $\tau_R\circ \rho=\rho\circ T$, so \(\rho\colon (X_g^{eq},T)\to (X/R,\tau_R)\) is an autonomous factor map. Since \((X_g^{eq},T)\) is equicontinuous and \(\rho\) is a continuous surjection between compact metric spaces, equicontinuity passes to the factor. Hence \((X/R,\tau_R)\) is equicontinuous. By Theorem~\ref{thm:shadow-under-CC}, collective convergence passes from $(X,f_{1,\infty})$ to the quotient factor $(X/R,\bar f_{1,\infty})$, and then Proposition~\ref{prop:eq-shadow-to-nads} implies that $(X/R,\bar f_{1,\infty})$ is equicontinuous. Thus $R$ is the kernel of an equicontinuous factor.

\smallskip
\noindent
(2) By part~(1), the kernels of equicontinuous factors are exactly the closed equivalence relations containing $S_g$ and preserved by every generator. Since $S_{eq}(f_{1,\infty})$ is the intersection of all such kernels, it is the smallest relation with these two properties. The final assertion follows by taking $R=S_g$.

\smallskip
\noindent
(3) By part~(2), $S_{eq}(f_{1,\infty})$ contains $(f_w\times f_w)(S_g)$ for every finite word $w$, and hence contains $\Sigma_g$.

Conversely, let $R_\Sigma$ be the smallest closed equivalence relation containing $\Sigma_g$. We claim that $R_\Sigma$ is preserved by every generator $f_n$. First, the generating set $\Sigma_g$ is forward invariant: if $w\in\mathcal W^*$ and $n\ge1$, then
\[
 (f_n\times f_n)\bigl((f_w\times f_w)(S_g)\bigr)=(f_{w\ast n}\times f_{w\ast n})(S_g)\subseteq \Sigma_g,
\]
where $w\ast n$ denotes the word obtained by appending $n$ to $w$. The algebraic equivalence relation generated by $\Sigma_g$ consists of finite chains whose links lie in $\Sigma_g$ or in the inverse relation $\Sigma_g^{-1}$. Since $f_n\times f_n$ sends each such link to another link of the same type, it preserves this algebraic equivalence relation. Finally, $f_n\times f_n$ is continuous, so it preserves the closure $R_\Sigma$. Thus $R_\Sigma$ is a closed equivalence relation containing $S_g$ and preserved by every generator $f_n$. By part~(2),
\[
 S_{eq}(f_{1,\infty})\subseteq R_\Sigma.
\]
The reverse inclusion was already proved, so equality holds.
\end{proof}

The next corollary shows how the maximal equicontinuous factor description simplifies once the autonomous equicontinuous structure relation is already known.

\begin{corollary}\label{cor:RP-kernel-version}
Let $(X,f_{1,\infty})$ be a non-autonomous dynamical system, and let $g\colon X\to X$ be a continuous map such that $f_n\to g$ uniformly. Suppose that the family $\{f_i^r\}_{i,r\ge 1}$ converges collectively to $\{g^r\}_{r\ge 1}$ and that $(X,f_{1,\infty})$ is uniformly rigid. Let $\pi_g\colon (X,g)\to (X_g^{eq},T)$ be the autonomous maximal equicontinuous factor of $g$, and put $S_g=\ker\pi_g$. Suppose moreover that $S_g=RP_g$. If each $f_n$ preserves $RP_g$, then $S_{eq}(f_{1,\infty})=RP_g=RP_{f_{1,\infty}}$. In particular, the quotient by $RP_{f_{1,\infty}}$ is the maximal equicontinuous factor of $(X,f_{1,\infty})$.
\end{corollary}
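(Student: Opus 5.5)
The proof is a direct combination of Theorem~\ref{thm:kernel-classification}(2) and Theorem~\ref{thm:RP-comparison}.

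First, we verify the hypotheses of Theorem~\ref{thm:kernel-classification}(2) for the relation $S_g$. By assumption $S_g=RP_g$, and $S_g$ is a closed equivalence relation, being the kernel of the continuous map $\pi_g$ into a compact metric space. Hence $RP_g$ is a closed equivalence relation. By hypothesis, every generator $f_n$ preserves $RP_g=S_g$. The ``in particular'' clause of Theorem~\ref{thm:kernel-classification}(2) then gives $S_{eq}(f_{1,\infty})=S_g=RP_g$.

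To see this directly, note two facts:
\begin{itemize}
\item By part~(1) of that theorem, $S_g$ is itself the kernel of an equicontinuous factor. It contains itself and is preserved by every $f_n$. Hence $S_{eq}(f_{1,\infty})\subseteq S_g$.
\item By Corollary~\ref{cor:strict-factor-kernel-contains-Sg}, every equicontinuous factor kernel contains $S_g$. Hence $S_g\subseteq S_{eq}(f_{1,\infty})$.
\end{itemize}

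Second, the standing hypotheses (uniform convergence, collective convergence and uniform rigidity) are exactly those of the second assertion of Theorem~\ref{thm:RP-comparison}. That theorem gives $RP_{f_{1,\infty})}=RP_g$. Chaining the equalities yields
\[
S_{eq}(f_{1,\infty})=RP_g=RP_{f_{1,\infty}}.
\]

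Finally, for the ``in particular'' statement: Proposition~\ref{prop:strict-mef-existence} and Remark~\ref{rem:Seq-kernel-definition} identify the maximal equicontinuous factor with the quotient $X/S_{eq}(f_{1,\infty})$, equipped with the induced maps $\bar f_n$. Since $S_{eq}(f_{1,\infty})=RP_{f_{1,\infty}}$, this quotient is $X/RP_{f_{1,\infty}}$. It is a legitimate factor because $RP_{f_{1,\infty}}$ is a closed equivalence relation preserved by every $f_n$, which follows from the equalities just obtained.

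The only point needing any care is checking that $RP_g$ is closed and an equivalence relation. In general this is not automatic, but here it is inherited from the identification $RP_g=S_g=\ker\pi_g$. Apart from that, the argument is purely a matter of citing the earlier results.
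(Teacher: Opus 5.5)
Your proposal is correct and is the paper's argument: Theorem~\ref{thm:kernel-classification}(2) gives $S_{eq}(f_{1,\infty})=S_g=RP_g$, and Theorem~\ref{thm:RP-comparison} gives $RP_{f_{1,\infty}}=RP_g$; your extra unpacking via part~(1) and Corollary~\ref{cor:strict-factor-kernel-contains-Sg} is also sound. One small correction to your closing remark: $RP_g$ is always closed directly from its definition, so the identification $RP_g=S_g$ is needed only to obtain transitivity.
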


\begin{proof}
Since $S_g=RP_g$ and each $f_n$ preserves $RP_g$, Theorem~\ref{thm:kernel-classification}(2) gives $S_{eq}(f_{1,\infty})=S_g=RP_g.$ By Theorem~\ref{thm:RP-comparison}, proved in Section~\ref{sec:relations}, $RP_{f_{1,\infty}}=RP_g.$ Combining the two equalities yields the claim.
\end{proof}

The next example shows that, without uniform rigidity, the regional proximal relation of the non-autonomous system need not be the kernel of the maximal equicontinuous factor.

\begin{example}\label{ex:RP-not-general-MEF-kernel}
Consider the system from Example~\ref{ex:CC-no-UR-counterexample}. As shown there, $RP_{f_{1,\infty}}=X\times X.$

On the other hand, for $n\ge1$, $f_1^n(x)=R_\alpha^{\,n-1}(x_0)$  for all $x\in X$. Thus the family $\{f_1^n:n\ge0\}$ is equicontinuous. Hence $S_{eq}(f_{1,\infty})=\Delta_X.$

Consequently $RP_{f_{1,\infty}}$ is not, in general, the kernel of the maximal equicontinuous factor.
\end{example}

\begin{remark}\label{rem:classical-minimal-case}
Corollary~\ref{cor:RP-kernel-version} applies in particular whenever the autonomous limit system lies in a classical setting where the equicontinuous structure relation is the regional proximal relation. This includes, for example, the classical minimal abelian transformation-group framework of Veech and Ellis--Keynes \cite{EllisKeynes1971,Veech1968}.
\end{remark}

The key hypothesis in Theorem~\ref{thm:kernel-classification} is the preservation of $S_g$ by every generator. The next proposition collects useful sufficient criteria for this property.

\begin{proposition}\label{prop:descent-criteria}
Let $g\colon X\to X$ be continuous, let $\pi_g\colon (X,g)\to (X_g^{eq},T)$ be the autonomous maximal equicontinuous factor of $g$, and put $S_g=\ker\pi_g$. Let $(f_n)_{n\ge1}$ be a sequence of continuous self-maps of $X$. Each of the following conditions implies that $S_g$ is preserved by every generator $f_n$:
\begin{enumerate}
\item $f_n\circ g=g\circ f_n$ for every $n\ge 1$;
\item $\pi_g\circ f_n\circ g=\pi_g\circ g\circ f_n$ for every $n\in\mathbb N$;
\item each $f_n$ belongs to the enveloping semigroup $E(X,g)$.
\end{enumerate}
\end{proposition}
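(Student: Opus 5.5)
The plan is to reduce everything to criterion (2). I will show (1)$\Rightarrow$(2) and (3)$\Rightarrow$(1), and then prove that (2) implies preservation of $S_g$ using the universal property of the autonomous maximal equicontinuous factor $\pi_g$.

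\emph{Step 1: (1) implies (2).} This is immediate: composing $f_n\circ g=g\circ f_n$ on the left with $\pi_g$ gives $\pi_g\circ f_n\circ g=\pi_g\circ g\circ f_n$.

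\emph{Step 2: (3) implies (1).} Let $q\in E(X,g)$ and choose a net with $g^{m_\alpha}(z)\to q(z)$ for every $z\in X$, as in Definition~\ref{def:enveloping-semigroup}. For $z\in X$ we have $q(g(z))=\lim_\alpha g^{m_\alpha}(g(z))=\lim_\alpha g\bigl(g^{m_\alpha}(z)\bigr)=g(q(z))$, where the last equality uses continuity of $g$. Thus every element of $E(X,g)$ commutes with $g$. In particular, each $f_n\in E(X,g)$ satisfies $f_n\circ g=g\circ f_n$, which is (1). Continuity of $f_n$ is not even needed for this step; it is used later.

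\emph{Step 3: (2) implies that $S_g$ is preserved.} Fix $n$ and put $\varphi=\pi_g\circ f_n\colon X\to X_g^{eq}$. This map is continuous. Using $\pi_g\circ g=T\circ\pi_g$ together with (2), we get
\[
\varphi\circ g=\pi_g\circ f_n\circ g=\pi_g\circ g\circ f_n=T\circ\varphi .
\]
Let $Z=\varphi(X)$. Then $Z$ is compact, and it is $T$-invariant because $T(Z)=\varphi(g(X))\subseteq Z$. The restriction of $T$ to $Z$ is equicontinuous, since it is a restriction of the equicontinuous map $T$. Hence $\varphi\colon (X,g)\to (Z,T|_Z)$ is an equicontinuous factor of $(X,g)$. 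By the universal property of $\pi_g$, we get $S_g\subseteq\ker\varphi$. Unwinding this: $(x,y)\in S_g$ implies $\pi_g(f_n(x))=\pi_g(f_n(y))$, that is, $(f_n(x),f_n(y))\in S_g$. So $(f_n\times f_n)(S_g)\subseteq S_g$.

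The main obstacle is Step 3. The map $\varphi$ is generally neither surjective onto $X_g^{eq}$ nor a priori a factor onto a system of the prescribed form, so the key point is to restrict to its image $Z$. One must also check that the universal property of the autonomous maximal equicontinuous factor applies to factors onto such invariant subsystems. This holds because $S_g$ is the intersection of the kernels of all equicontinuous factors of $(X,g)$, exactly as in the autonomous analogue of Remark~\ref{rem:Seq-kernel-definition}.
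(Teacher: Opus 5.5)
Your Steps 1 and 3 match the paper's argument, including restricting to the image $Z=\pi_g(f_n(X))$ and appealing to maximality of $\pi_g$. The genuine difference is criterion (3).

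You show that (3) implies (1). Every $q\in E(X,g)$ commutes with $g$, which follows by passing to the limit in $g^{m_\alpha}\circ g=g\circ g^{m_\alpha}$ using continuity of $g$. As a result, all three conditions funnel into the single universal-property argument.

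The paper instead treats (3) directly. $S_g$ is closed and preserved by every $g^{m}$, so for $(x,y)\in S_g$ the pointwise limit $(f_n(x),f_n(y))=\lim_\alpha\bigl(g^{m_\alpha}(x),g^{m_\alpha}(y)\bigr)$ stays in $S_g$.

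Your route shows that the criteria are nested, (3)$\Rightarrow$(1)$\Rightarrow$(2), so (3) is formally a special case of (1) for continuous $f_n$. However, your route depends on maximality of $\pi_g$ and on continuity of $f_n$, which is needed for $\pi_g\circ f_n$ to be a factor map onto a compact subsystem.

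The paper's argument for (3) is more elementary and more general. It uses neither maximality nor continuity of $f_n$. It also applies verbatim to any closed relation preserved by $g$: such a relation is preserved by every element of $E(X,g)$.
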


\begin{proof}
Clearly (1) implies (2).

Now suppose that condition~(2) holds. For each $n\ge1$, put $q_n=\pi_g\circ f_n$. Then $q_n\circ g=T\circ q_n.$ The image $Z_n=q_n(X)$ is a compact $T$-invariant subsystem of the equicontinuous system $(X_g^{eq},T)$, hence $(Z_n,T|_{Z_n})$ is equicontinuous. The map $q_n\colon (X,g)\to (Z_n,T|_{Z_n})$ is a continuous surjective factor map. By maximality of $\pi_g$, its kernel contains $S_g$. Therefore, if $\pi_g(x)=\pi_g(y)$, then
\[
 \pi_g(f_n(x))=q_n(x)=q_n(y)=\pi_g(f_n(y)),
\]
which means that $(f_n\times f_n)(S_g)\subseteq S_g$.

Now suppose that condition~(3) holds. Fix $n\ge 1$ and $(x,y)\in S_g$. Since $f_n\in E(X,g)$, there exists a net $(g^{m_\alpha})$ such that $g^{m_\alpha}(z)\longrightarrow f_n(z)$ for every $z\in X$.

For every $\alpha$, $(g^{m_\alpha}\times g^{m_\alpha})(S_g)\subseteq S_g$, because $\pi_g(g^{m_\alpha}(x))=T^{m_\alpha}(\pi_g(x))$. Since $S_g$ is closed, passing to the pointwise limit at the pair $(x,y)$ gives $(f_n(x),f_n(y))\in S_g$. Thus $S_g$ is preserved by every generator.
\end{proof}

\section*{Acknowledgments} 
The first author is supported by Junior Research Fellowship, E-certificate No. 24J/01/00293 and CSIR-HRDG Ref. No: June-24(ii)/EU-V, for carrying out doctoral work. The second and third authors are supported by Faculty Research Programme Grant, Ref. No./IoE/2025-26/12/FRP, Institution of Eminence, University of Delhi, India.

\bibliographystyle{plain}
\bibliography{references_MEF}

\end{document}